\date{}
\newtheorem{Theo}{Theorem}[section]
\newtheorem{Prop}[Theo]{Proposition}
\newtheorem{Lemm}[Theo]{Lemma}
\newtheorem{Conj}[Theo]{Conjeture}
\newcommand{\Inf}{\operatorname{Inf}}
\newcommand{\Var}{\operatorname{Var}}
\newcommand{\BH}{\operatorname{BH}}
\DeclareMathOperator{\Maj}{Maj}
\DeclareMathOperator{\sign}{sign}
\newcommand{\supp}{\operatorname{supp}}
\newcommand{\poly}{\operatorname{poly}}
\def\N{\mathbb{ N}}
\def\R{\mathbb{ R}}
\def\JJ{\mathcal{ J}}
\def\MM{\mathcal{ I}}
\begin{document}

\title[Spectrum of functions on Boolean cubes]{\bf On the Fourier spectrum  of functions\\ on Boolean
cubes}

\author{ A.~Defant, M.~Masty{\l}o and A.~P\'erez}

\maketitle

\noindent
\renewcommand{\thefootnote}{\fnsymbol{footnote}}
\footnotetext{2010 \emph{Mathematics Subject Classification}: Primary 06E30, 47A30. Secondary 81P45.} \footnotetext{\emph{Key words and phrases}:
Boolean functions, polynomial inequalities, Fourier analysis on groups.} \footnotetext{The second named author was supported by the National Science Centre, Poland,
project no. 2015/17/B/ST1/00064. The research of the third author was partially done during a stay in Oldenburg (Germany) under the support of a PhD fellowship of ``La Caixa Foundation'', and of the projects of MINECO/FEDER (MTM2014-57838-C2-1-P) and Fundaci\'{o}n S\'{e}neca - Regi\'{o}n de Murcia (19368/PI/14).}.

\begin{abstract}
\noindent
Let $f$ be a real-valued, degree-$d$ Boolean function defined on the $n$-dimensional  Boolean cube $\{ \pm 1\}^{n}$, and  $f(x) = \sum_{S \subset \{1,\ldots,d\}} \widehat{f}(S) \prod_{k \in S} x_k$ its Fourier-Walsh expansion. The main result states that there is an absolute constant $C >0$ such that the
$\ell_{2d/(d+1)}$-sum of the Fourier coefficients of $f:\{ \pm 1\}^{n} \rightarrow [-1,1]$ is  bounded by  $\leq C^{\sqrt{d \log d}}$.  It was recently proved that a~similar result holds for complex-valued  polynomials on the $n$-dimensional poly torus $\mathbb{T}^n$, but that in contrast to this a replacement of the $n$-dimensional  torus $\mathbb{T}^n$ by $n$-dimensional cube  $[-1, 1]^n$ leads to a substantially   weaker estimate.
This in the Boolean case  forces us to invent  novel techniques which differ from the ones used in the  complex or real case.
We indicate how  our result is  linked with several questions in quantum information theory.
\end{abstract}




\vspace{10 mm}

\section{Introduction}

The analysis of maps $f:\{ \pm 1\}^{n} \rightarrow \mathbb{R}$ defined on the Boolean cube $\{ \pm 1\}^{n}$ is of interest in theoretical computer sciences, signal processing, social choice, combinatorics, graph theory, as well as others.
For detailed explanation  we refer to the surveys  \cite{RyanSurvey} and \cite{Wolf}, or the more extensive book \cite{BooleanRyan} and the references therein. The study of these functions is carried out through their polynomial expansion, a canonical representation of $f$ in terms of basic functions known as Walsh functions, which are defined for every $S \subset [n]:=\{ 1, \ldots, n\}$ as
\[
\chi_{S}:\{ \pm 1\}^{n} \rightarrow \{ \pm 1\} \, ,
\hspace{3mm} \chi_{S}(x) = x^{S}:= \prod_{k \in S}{x_{k}}, \hspace{3mm} x \in \{\pm 1\}^{n},
\]
where $\chi_{\emptyset}(x):= 1$ for each $x\in \{\pm 1\}^n$.
Using elementary linear algebra it can be checked that every $f$ as above can be written as a~unique linear combination of these $\chi_{S}$. Another more profitable point of view to prove this fact is by means of Fourier analysis on groups: we can look at  $\{\pm 1\}^{n}$ as a~compact abelian group endowed with the coordinatewise product and the  discrete topology (the Haar measure is then given by the normalized counting measure). This way, for each function $f:\{ \pm 1\}^{n} \rightarrow \mathbb{R}$ we have an integral or expectation given by
\[
\mathbb{E}\big[ f \big] := \frac{1}{2^{n}} \sum_{x \in \{ \pm 1\}^{n}}{f(x)}
\]
The dual group of $\{ \pm 1\}^{n}$ actually consists of the set of all Walsh functions $\chi_{S}$ for $S \subset [n]$, which let us associate to each such $f:\{\pm 1\}^{n} \rightarrow \R$ its Fourier-Walsh expansion
\begin{equation}\label{equa:FourierExpansionShort}
f(x) = \sum_{S \subset [n]}{\widehat{f}(S) \, x^S}\, , \hspace{3mm} x \in \{\pm 1\}^{n}\,,
\end{equation}
where the coefficients are explicitly given by $\widehat{f}(S) = \mathbb{E}\big[ f \cdot \chi_{S} \big]$.
Thereby, a~nonzero function $f$ of degree $d$ satisfies that $\widehat{f}(S) = 0$ whenever $|S| > d$, i.e., its Fourier spectrum concentrates on levels less than or equal to $d$. We say that $f$ is $d$-homogeneous if moreover $\widehat{f}(S) = 0$ provided  $|S| \neq d$, i.e., the Fourier spectrum concentrates at level $d$.

\subsection{Main result}
Blei in \cite{Blei01}  proved that for each $d$ there is a~constant $C(d)>0$ such that for each degree-$d$ Boolean function
 $f\colon \{ \pm 1\}^{n}\rightarrow\mathbb{R}$,
\begin{equation}\label{equa:BHBoolean}
\Big(\sum_{\substack{S \subset [n]\\ |S|\leq d}}{|\widehat{f}(S)|^{\frac{2d}{d+1}}}\Big)^{\frac{d+1}{2d}} \leq C(d) \, \| f\|_{\{ \pm 1\}^{n}},
\end{equation}
and here the exponent  $\tfrac{2d}{d+1}$ is best possible  (to denote the supremum norm of a scalar-valued function on $K$ we will often write $\|f\|_K$
instead of $\|f\|_\infty$). The best possible constant $C(d)$ is denoted by
\[
\BH_{\{\pm 1\}}^{\leq d}\,.
\]
The inequality \eqref{equa:BHBoolean} is a Boolean analog of an important inequality for complex polynomials due to Bohnenblust and Hille \cite{BoHi31}. This inequality today forms a  fundamental tool of the theory of Dirichlet series $\sum_n a_n n^{-s}$, and it states  that
for every $d$ there is a (best) constant
$$\BH_{\mathbb{T}}^{\leq d}$$
such for each complex valued degree-$d$ polynomial $P(z) =\sum_{\substack{\alpha \in \mathbb{N}_0^n\\|\alpha|\leq d}}a_\alpha z^\alpha$ in $n$  complex variables  $z_1, \ldots, z_n$
\begin{equation} \label{original}
\Big(\sum_{\substack{\alpha \in \mathbb{N}_0^n\\|\alpha|\leq d}}|a_\alpha|^{\frac{2d}{d+1}}\Big)^{\frac{d+1}{2d}} \leq \BH_{\mathbb{T}}^{\leq d} \, \| f\|_{\mathbb{T}^n};
\end{equation}
again the exponent $\tfrac{2d}{d+1}$ can not be improved. Originally, this result was  proved for
$d$-homogeneous polynomials only. If we define
\[\BH_{\mathbb{T}}^{= d}\,,\]
in the  by now obvious way, a  straight forward trick   shows that
\begin{equation} \label{<d=d}
 \BH_{\mathbb{T}}^{= d} = \BH_{\mathbb{T}}^{\leq d} \,;
\end{equation}
indeed,  for a degree-$d$ polynomial $P(z) =\sum_{|\alpha|\leq d}a_\alpha z^\alpha$ on $\mathbb{T}^n$ the polynomial
$Q(z,w) = \sum_{|\alpha|\leq d}a_\alpha z^\alpha w^{d-|\alpha|}$ is $d$-homogeneous on $\mathbb{T}^{n+1}$ with the same set of coefficients
and the same sup-norm.
\\

In recent years the Bohnenblust-Hille constants $\BH_{\mathbb{T}}^{\leq d}$ were undertaken an intensive study. The original proof of \eqref{original} and its later improvements  give upper estimates
which are essentially of the order $\sqrt{d}^d$, but in \cite{Annals} it was proved that
 $\BH_{\mathbb{T}}^{\leq d}$  grows at most like $\sqrt{2}^d$,
and this  was later substantially improved in \cite{BohrRadiusBPS}, where the authors show that there is an absolute constant $C>0$ such that
\begin{align} \label{BaPeSe1}
\BH_{\mathbb{T}}^{\leq d} \leq C^{\sqrt{d \log d}}.
\end{align}
In this case we say that the growth of $\BH_{\mathbb{T}}^{\leq d}$ is subexponential. In particular this implies that
\begin{align} \label{BaPeSeA1A}
\limsup_{d \rightarrow \infty} \sqrt[d]{\BH_{\mathbb{T}}^{\leq d}}  = 1\,.
\end{align}
 A~natural question appears related to the asymptotic decay of $\BH_{\{\pm 1\}}^{\leq d}$  as a~function in $d$ in the case when  for each $n\in \mathbb{N}$
the group $\mathbb{T}^n$ is replaced by the group  $\{\pm 1\}^n$.
Our main result is the following analog of \eqref{BaPeSe1}.

\begin{Theo} \label{main}
There is an absolute constant $C> 0$ such that for each $d \in \N$
$$\BH_{\{\pm 1\}}^{\leq d} \leq C^{\sqrt{d\log{d}}}.$$
In particular, $\limsup_{d \rightarrow \infty} \sqrt[d]{\BH_{\{\pm 1\}}^{\leq d}}  = 1$.
\end{Theo}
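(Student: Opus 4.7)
The plan is to adapt the strategy of Bayart--Pellegrino--Seoane--Sep\'ulveda that gave \eqref{BaPeSe1} on the torus, substituting a Boolean-specific tool wherever the torus argument breaks on $\{\pm 1\}^n$.

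I would first try to reduce the degree-$\leq d$ statement to the $d$-homogeneous one. The trick underlying \eqref{<d=d} is not available on $\{\pm 1\}^n$: for $w \in \{\pm 1\}$ the monomial $w^{d-|\alpha|}$ depends only on the parity of $d-|\alpha|$, so the homogenization $Q(z,w) = \sum_\alpha a_\alpha z^\alpha w^{d-|\alpha|}$ is not a well-defined object on $\{\pm 1\}^{n+1}$ with the same sup-norm as $P$. I would therefore split $f = \sum_{k=0}^d f_k$ into Fourier--Walsh homogeneous layers $f_k = \sum_{|S|=k}\widehat f(S)\,\chi_S$ and combine the $k$-homogeneous estimate applied to each layer through the triangle inequality in $\ell^{2d/(d+1)}$, using $L^p$ level-projection bounds on the Boolean cube to control each $\|f_k\|_{\{\pm 1\}^n}$ (or a suitable surrogate norm) in terms of $\|f\|_{\{\pm 1\}^n}$; any $\mathrm{poly}(d)$ loss incurred is absorbed into $C^{\sqrt{d\log d}}$.

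The core of the proof would then be the $d$-homogeneous bound. Associating to $f_d$ its symmetric zero-diagonal coefficient tensor $T$ with $T_{i_1,\ldots,i_d} = \widehat f(\{i_1,\ldots,i_d\})$ for distinct indices and $0$ otherwise, the target is a multilinear Bohnenblust--Hille inequality on $\{\pm 1\}^n$ of the form
$$
\Big(\sum_{i_1,\ldots,i_d} |T_{i_1,\ldots,i_d}|^{\frac{2d}{d+1}}\Big)^{\frac{d+1}{2d}} \leq M_d\,\|f_d\|_{\{\pm 1\}^n},
$$
with $M_d \leq C^{\sqrt{d \log d}}$. The induction on $d$ would couple a Blei--Fournier-type mixed norm inequality with the hypercontractive bound of Bonami--Beckner on the discrete cube, so that each inductive step costs only a factor of $1 + O(\sqrt{\log d / d})$ and the product telescopes to the desired subexponential bound.

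The main obstacle I anticipate is the step where the torus argument invokes polarization: every polarization identity I know expresses the symmetric $d$-linear form $L$ as a signed average of values of its diagonal at linear combinations $\sum_i \epsilon_i x^{(i)}$ of the arguments, which leave $\{\pm 1\}^n$ as soon as $d \geq 2$. The natural ``multilinear BH $\Rightarrow$ homogeneous BH via polarization'' route is therefore blocked, and the ``novel techniques'' promised in the abstract must provide a Boolean-intrinsic substitute -- presumably a probabilistic or averaging argument that stays inside the cube and still compares $\sup_{(\{\pm 1\}^n)^d}|L|$ to $\|f_d\|_{\{\pm 1\}^n}$ with a subexponential constant.
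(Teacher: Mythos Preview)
Your proposal has a genuine gap in the reduction step. Splitting $f=\sum_{m=0}^d f_m$ into homogeneous layers and bounding each $\|f_m\|_{\{\pm 1\}^n}$ by $\|f\|_{\{\pm 1\}^n}$ does \emph{not} cost only $\mathrm{poly}(d)$: the paper computes (at the beginning of Section~\ref{get in}) that for the majority function $\Maj_d$ the $m$-homogeneous part with $m=(d-1)/2$ has sup norm of order at least $2^{d/2}/\sqrt{d}$ while $\|\Maj_d\|_\infty=1$, so the layer-projection constant is genuinely exponential in $d$. No $L^p$ projection bound on the cube improves this to $\mathrm{poly}(d)$; this is precisely why the paper cannot reduce the degree-$d$ case to the homogeneous one and must instead run the full Blei--hypercontractivity--polarization machine directly on the degree-$d$ function, using \emph{multiaffine} (not multilinear) forms and a new polarization estimate (Proposition~\ref{Prop:estimationTwoBlockDecoupledMain}) backed by a Markov-type extremal result for the basis $\psi_{d,m}(t)=2^{-d}(1+t)^m(1-t)^{d-m}$ (Proposition~\ref{Theo:MarkovTypeTheoremPsi}).

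On the polarization obstacle you correctly flag that the usual signed-sum identity leaves $\{\pm 1\}^n$, but you miss the escape route the paper actually uses: since the associated polynomial $P_f$ is affine in each variable, one has $\|f\|_{\{\pm 1\}^n}=\|P_f\|_{[-1,1]^n}$ (equation~\eqref{affine}), so one may work over the solid cube $[-1,1]^n$ at no cost. Polarization is then carried out with \emph{convex} combinations $\tfrac{1+t}{2}x+\tfrac{1-t}{2}y\in[-1,1]^n$ (homogeneous case: $tx$ with $t\in[-1,1]$), and the required coefficient bounds come from Chebyshev/Markov-type extremal inequalities on $[-1,1]$ rather than from any probabilistic averaging inside the discrete cube. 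The ``novel techniques'' are exactly this multiaffine polarization and the new Markov-type theorem, not a Boolean-intrinsic decoupling argument.
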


In section \ref{AA} we indicate that this result is closely related with several questions in quantum information theory.\\

Comparing  Bohnenblust-Hille constants for complex polynomials on the  polytorus $\mathbb{T}^n$ with Bohnenblust-Hille
constants for real polynomials on the \linebreak $n$-dimensional cube $[-1,1]^n$ indicates  that both behave substantially
different. Therefore  the preceding result for the Boolean cube $\{\pm 1\}^n$ was not necessarily expected, and in fact its
proof has to overcome new technical difficulties. We intend to explain all this  in the rest of this introduction.\\

\subsection{Real Bohnenblust-Hille constants}
A comparison of our proof of Theorem \ref{main} with that of \eqref{BaPeSe1}  shows that in fact in the setting of Boolean functions  new substantial difficulties appear. Looking at \eqref{equa:FourierExpansionShort} we see that each Boolean function
$f: \{\pm 1\}^n \rightarrow \mathbb{R}$ is the restriction a unique polynomial $P_{f}: \mathbb{R}^{n} \rightarrow \mathbb{R}$ being affine in each variable, a special type of real polynomial sometimes referred as tetrahedral:
\begin{equation} \label{Bild}
\begin{tikzcd}[column sep=small]
\{ \pm 1\}^{n} \arrow[hookrightarrow]{rr}{} \arrow[swap]{dr}{f}& & \,\,\mathbb{R}^{n} \arrow{dl}{P_{f}}\\
& \mathbb{R} &
\end{tikzcd}
\end{equation}
This identification is actually an isometry for the supremum norm since the fact that $P_{f}$ is separately affine gives
\begin{equation} \label{affine}
\| f\|_{\infty} := \sup_{x \in \{ \pm 1\}^{n}}{|f(x)|} = \sup_{x \in \{ \pm 1\}^{n}}{|P_{f}(x)|} = \sup_{x \in [-1,1]^{n}}{|P_{f}(x)|} =: \| P_{f}\|_{\infty}.
\end{equation}
We can then look at these objects from different points of view, allowing us to combine tools and techniques from Fourier analysis, polynomials and multilinear forms in order to study the Fourier spectrum of these functions. Based on results and ideas which have been developed in recent years in the setting of complex polynomials, this is indeed our aim in the present paper.\\

What about the Bohnenblust-Hille inequality for  real polynomials on the $n$-dimensional cube $[-1,1]^n$?
We denote  by $$\BH_{[-1,1]}^{\leq d}$$ the best constant $C\ge 1$ such that for all real(!) degree-$d$ polynomials
$P(x) =\sum_{\substack{\alpha \in \mathbb{N}_0^n\\|\alpha| \leq d}}a_\alpha x^\alpha$ we have
\begin{align} \label{BaPeSe11}
\Big(\sum_{\substack{\alpha \in \mathbb{N}_0^n\\|\alpha|\leq d}}|a_\alpha|^{\frac{2d}{d+1}}\Big)^{\frac{d+1}{2d}} \leq \BH_{[-1,1]}^{\leq d} \, \| f\|_{[-1,1]^n},
\end{align}
and analogously we define for $d$-homogeneous polynomials the constant  $\BH_{[-1,1]}^{= d}$ (with its by now obvious meaning).

\begin{Prop} \label{BvB}
We have
\begin{align} \label{BaPeSeAA}
\limsup_{d \rightarrow \infty} \sqrt[d]{\BH_{[-1,1]}^{= d}}  = 2\,,
\end{align}
and
\begin{align} \label{BaPeSeAAA}
\limsup_{d \rightarrow \infty} \sqrt[d]{\BH_{[-1,1]}^{\leq d}}  = 1+ \sqrt{2}\,.
\end{align}
\end{Prop}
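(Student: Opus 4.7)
Proof plan of Proposition \ref{BvB}. I would break the proof into four parts. For the upper bounds I would reduce the problem to the polytorus (via either the Siciak extremal function or the Joukowski substitution) and apply \eqref{BaPeSe1}; for the lower bounds I would exhibit extremal Chebyshev-type polynomials.

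For the upper bound of \eqref{BaPeSeAAA}, I would first invoke the one-variable Chebyshev extremal inequality $\sup_{|z|\leq 1}|p(z)|\leq(1+\sqrt 2)^d\sup_{|x|\leq 1}|p(x)|$ (tight for $p=T_d$ at $z=\pm i$) and extend it to the cube via the product structure of the Siciak pluricomplex extremal function $V_{[-1,1]^n}(z)=\max_jV_{[-1,1]}(z_j)$, whose maximum on $\mathbb{T}^n$ equals $\log(1+\sqrt 2)$. This yields
\[
\|P\|_{\mathbb{T}^n}\leq(1+\sqrt 2)^d\,\|P\|_{[-1,1]^n}
\]
for every real polynomial of degree $\leq d$; chaining with \eqref{BaPeSe1} applied to $P$ viewed as a complex polynomial on $\mathbb{T}^n$ gives $\BH_{[-1,1]}^{\leq d}\leq (1+\sqrt 2)^d C^{\sqrt{d\log d}}$. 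For the upper bound of \eqref{BaPeSeAA} a sharper reduction is available: I would use the Joukowski map $x_j\mapsto(z_j+z_j^{-1})/2$ to produce the Laurent polynomial $Q(z):=P((z+z^{-1})/2)$ on $\mathbb{T}^n$ with $\|Q\|_{\mathbb{T}^n}=\|P\|_{[-1,1]^n}$, and an explicit expansion of $\prod_j(z_j+z_j^{-1})^{\alpha_j}$ shows that, for $d$-homogeneous $P$, the top-frequency Laurent coefficient $\widehat Q(\alpha)$ at $\alpha\in\mathbb{N}_0^n$ with $|\alpha|=d$ equals exactly $2^{-d}a_\alpha$. The trigonometric-polynomial version of Bohnenblust--Hille on $\mathbb{T}^n$ (which inherits the same subexponential constant as \eqref{BaPeSe1} via standard mixed-norm/Blei techniques insensitive to signed Fourier exponents), applied to $Q$, then yields $\BH_{[-1,1]}^{=d}\leq 2^d C^{\sqrt{d\log d}}$.

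For the lower bound of \eqref{BaPeSeAAA}, the Chebyshev polynomial $T_d$ itself is extremal: the identity $T_d(x)=\tfrac12\bigl((x+\sqrt{x^2-1})^d+(x-\sqrt{x^2-1})^d\bigr)$ evaluated at $x=i$ gives $|T_d(i)|\sim\tfrac12(1+\sqrt 2)^d$, which equals the sum of absolute values of the coefficients of $T_d$ (all coefficients align with a common phase at $x=i$); since $T_d$ has at most $d+1$ nonzero coefficients, H\"older yields the $\ell^{2d/(d+1)}$-norm of its coefficients $\gtrsim(1+\sqrt 2)^d/\sqrt d$.

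The main obstacle in this plan is the matching lower bound for the homogeneous case \eqref{BaPeSeAA}. Naive homogenizations fall short: $y^dT_d(x/y)$ has sup-norm $2^{d-1}$ on $[-1,1]^2$ with the same coefficients as $T_d$, giving only $\BH_{[-1,1]}^{=d}\gtrsim((1+\sqrt 2)/2)^d\approx 1.21^d$, while the real part of $(x+iy)^d$ only reaches $\sqrt{2}^d$. To close the gap to the sharp rate $2^d$, one must construct a $d$-homogeneous polynomial in a growing number of variables --- for instance via a tensor product or random amplification of the Chebyshev extremal --- and carefully balance the $\ell^{2d/(d+1)}$-norm of the coefficients against the cube sup-norm.
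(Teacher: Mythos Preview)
Your treatment of \eqref{BaPeSeAAA} matches the paper's almost exactly: the upper bound via $\|P\|_{\mathbb{T}^n}\le(1+\sqrt2)^d\|P\|_{[-1,1]^n}$ (the Siciak extremal-function bound you describe is precisely Klimek's inequality, Lemma~\ref{felix1}(2)) chained with \eqref{BaPeSe1} is the paper's argument, and your lower bound via $T_d$ and the identity $|T_d(i)|=\sum_m|a_m(T_d)|$ is equivalent to the paper's use of the Markov numbers and~\eqref{poz}.

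For the upper bound in \eqref{BaPeSeAA} your Joukowski route is more roundabout than the paper's, and the step you gloss over is genuinely delicate. The Laurent polynomial $Q(z)=P(\operatorname{Re}z)$ is not analytic, and the assertion that Bohnenblust--Hille for trigonometric polynomials of degree $\le d$ on $\mathbb{T}^n$ holds with the same subexponential constant ``via standard mixed-norm/Blei techniques insensitive to signed Fourier exponents'' is not obvious: the standard proof polarizes a $d$-homogeneous analytic polynomial to a symmetric $d$-linear form, and there is no such polarization for a Laurent polynomial without first passing to an analytic polynomial in $2n$ variables --- a step that does not control the sup-norm in the direction you need. Indeed, if you try to extract from $Q$ just the top analytic part $\sum_{|\alpha|=d,\,\alpha\ge0}\widehat Q(\alpha)z^\alpha=2^{-d}P$ with sup-norm control, the required inequality $\|2^{-d}P\|_{\mathbb{T}^n}\le\|Q\|_{\mathbb{T}^n}=\|P\|_{[-1,1]^n}$ is exactly Visser's theorem $\|P\|_{\mathbb{T}^n}\le 2^{d-1}\|P\|_{[-1,1]^n}$ (Lemma~\ref{felix1}(3)). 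The paper simply invokes Visser directly and then applies \eqref{BaPeSe1} to $P$ on $\mathbb{T}^n$; that is the clean route, and your argument, once repaired, collapses to it.

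On the lower bound of \eqref{BaPeSeAA}, which you rightly flag as the obstacle: the paper does not prove it either. Equation~\eqref{BaPeSeAA} is quoted as a result of Campos--Jim\'enez-Rodr\'iguez--Mu\~noz-Fern\'andez--Pellegrino--Seoane-Sep\'ulveda \cite{RealBHineq}, so you may simply cite it. Your diagnosis that two-variable constructions (homogenized Chebyshev, $\operatorname{Re}(x+iy)^d$) fall short of the sharp rate $2^d$ is correct; the construction achieving it is in that reference.
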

The first equality is due to \cite{RealBHineq} whereas the second  formula seems new.
Interestingly, while the preceding proposition shows that for  general real polynomials the situation is dramatically different from  the complex case \eqref{BaPeSeA1A}, the result from Theorem \ref{main} indicates that tetrahedral polynomials seem to be halfway.\\

In order to give a proof of Proposition \ref{BvB},\eqref{BaPeSeAAA} we need some preliminary results.
 We collect a few crucial inequalities on polynomials which 'regulate the traffic' between the sup norm of
a~real polynomial taken on $[-1,1]^n$ compared with the sup norm of its complexification taken on $\mathbb{T}^n$, as well as the relation of the sup norm of real or complex polynomials in relation to their homogeneous parts.

\begin{Lemm} \label{felix1}
Let $P(z) = \sum_{|\alpha|\leq d} a_\alpha z^\alpha$ be a degree-$d$ polynomial on $\mathbb{C}^n$  with complex coefficients $ a_\alpha$,
and for $m \leq d$ denote by $P_m (z)  = \sum_{|\alpha| = m} a_\alpha z^\alpha$ its $m$-homogeneous part. Then
\begin{itemize}
\item[{\rm(1)}]
$\|P_m\|_{\mathbb{T}^n} \leq \|P\|_{\mathbb{T}^n}$\,
\item[{\rm(2)}]
$\|P\|_{\mathbb{T}^n} \leq (1 + \sqrt{2})^{d} \|P\|_{[-1,1]^n}$\,
\item[{\rm(3)}]
$\|P_d\|_{\mathbb{T}^n} \leq 2^{d-1} \|P\|_{[-1,1]^n}$ whenever  $P$ has real coefficients. Moreover, $C=2$ is best possible.
\item[{\rm(4)}]
$\|P_m\|_{[-1,1]^n} \leq (1 + \sqrt{2})^{d} \|P\|_{[-1,1]^n}$ whenever  $P$ has real coefficients. Moreover $C=1 + \sqrt{2}$
is best possible.
\end{itemize}
\end{Lemm}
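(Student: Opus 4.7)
I would prove the four inequalities in the order $(1)$, $(2)$, $(4)$, $(3)$, since $(4)$ is an immediate consequence of the first two while $(3)$ requires a more delicate argument. For $(1)$, I would extract the $m$-homogeneous part by Fourier analysis in one complex variable: for fixed $z \in \mathbb{T}^n$, the scalar polynomial $w \mapsto P(wz)$ on $\mathbb{C}$ has Taylor expansion $\sum_m w^m P_m(z)$, and integrating against the character $w \mapsto \bar w^m$ over $\mathbb{T}$ gives
\[
P_m(z) = \int_\mathbb{T} P(wz)\,\bar w^m\, dm(w),
\]
so by the triangle inequality, noting $wz \in \mathbb{T}^n$ for $w \in \mathbb{T}$, we obtain $|P_m(z)| \leq \|P\|_{\mathbb{T}^n}$.

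For $(2)$, the core is the multi-variable Bernstein--Walsh inequality. In one complex variable, the Siciak extremal function of $[-1,1]$ is $\log|z + \sqrt{z^2-1}|$, and on the unit circle this attains its maximum $\log(1+\sqrt{2})$ at $\pm i$; hence every polynomial $Q$ of degree $\leq d$ in one variable satisfies $\|Q\|_\mathbb{T} \leq (1+\sqrt{2})^d\, \|Q\|_{[-1,1]}$. Using the product formula $L_{K_1 \times \cdots \times K_n} = \max_j L_{K_j}$ for Siciak extremal functions of product sets together with the $n$-variable Bernstein--Walsh estimate $|P(z)| \leq e^{d\, L_K(z)} \|P\|_K$, the same exponential constant carries over to the pair $([-1,1]^n,\mathbb{T}^n)$, the key point being that the \emph{total} degree (and not the degree in each variable) governs the growth. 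Combining $(1)$, $(2)$ and the polydisk maximum principle then gives $(4)$:
\[
\|P_m\|_{[-1,1]^n} \leq \|P_m\|_{\overline{\mathbb{D}}^n} = \|P_m\|_{\mathbb{T}^n} \leq \|P\|_{\mathbb{T}^n} \leq (1+\sqrt{2})^d\, \|P\|_{[-1,1]^n}.
\]
Sharpness of the base $1+\sqrt{2}$ is witnessed by $P = T_d(x_1)$ taken in one variable.

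For $(3)$ the factor $(1+\sqrt{2})^d$ from $(2)$ is too large, so a different argument is required. My plan is to apply the classical Chebyshev leading-coefficient inequality $|c_d| \leq 2^{d-1}\|Q\|_{[-1,1]}$, valid for any one-variable polynomial $Q(t) = c_d t^d + \ldots$ of degree $d$, to a cleverly chosen one-variable restriction of $P$. Concretely, for $z \in \mathbb{T}^n$ and a unimodular $\lambda$ with $\lambda\, P_d(z) = |P_d(z)|$, the polynomial
\[
R(t) = \tfrac{1}{2}\bigl(\lambda\, P(tz) + \bar\lambda\, P(t\bar z)\bigr),\qquad t \in \mathbb{R},
\]
has real coefficients (using $P(t\bar z) = \overline{P(tz)}$ for real $P$), degree $\leq d$, and leading coefficient $|P_d(z)|$, so Chebyshev yields $|P_d(z)| \leq 2^{d-1}\, \sup_{t \in [-1,1]} |R(t)|$. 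The \textbf{main obstacle} is the final estimate $\sup_{t\in[-1,1]}|R(t)| \leq \|P\|_{[-1,1]^n}$: since for complex $z$ the segment $\{tz:t \in [-1,1]\}$ leaves $[-1,1]^n$, a direct bound only gives $\|P\|_{\overline{\mathbb{D}}^n}$, and chaining with $(2)$ would cost the prohibitive factor $(1+\sqrt{2})^d$. Closing this gap must exploit the real structure of $P$ more strongly, perhaps via a polarization of the symmetric multilinear form associated to $P_d$, or via an integral representation of $P_d(z)$ as a suitable weighted average of values of $P$ on $[-1,1]^n$. Sharpness of the base $C=2$ is again witnessed by $P=T_d(x_1)$, for which $\|P\|_{[-1,1]^n}=1$ and $\|P_d\|_{\mathbb{T}^n}=2^{d-1}$.
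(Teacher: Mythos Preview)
Your proofs of (1) and (2) are correct and match what the paper does, which is simply to label (1) a standard Cauchy estimate and to attribute (2) to Klimek.

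Your proof of (4) via the chain
\[
\|P_m\|_{[-1,1]^n}\le\|P_m\|_{\mathbb{T}^n}\le\|P\|_{\mathbb{T}^n}\le(1+\sqrt 2)^d\,\|P\|_{[-1,1]^n}
\]
is correct but takes a genuinely different route from the paper's. The paper never leaves the real cube: for fixed $x\in[-1,1]^n$ it applies Markov's classical one-variable coefficient inequality to $t\mapsto P(tx)=\sum_{m} t^m P_m(x)$, obtaining the finer per-level bound $\|P_m\|_{[-1,1]^n}\le M_{m,d}\,\|P\|_{[-1,1]^n}$ with the explicit Markov numbers $M_{m,d}$, and then invokes the (Stirling-based) asymptotic $\sup_{m\le d}M_{m,d}^{1/d}=1+\sqrt 2$. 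Your route is slicker and avoids Markov's theorem entirely, at the price of passing through the complex domain (so it leans on the pluripotential-theoretic (2)) and of losing the per-$m$ refinement; the paper's route is more elementary once Markov's theorem is granted and yields extra information used elsewhere in the paper. Note also that your sharpness claim ``witnessed by $T_d$'' is correct but not self-contained: to verify that some $\|(T_d)_m\|_{[-1,1]}=|a_m(T_d)|$ is of order $(1+\sqrt 2)^d$ \emph{is} precisely the Markov-number asymptotic just mentioned, a nontrivial computation you have not supplied.

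For (3) you correctly identify a genuine obstacle in your one-variable reduction (the segment $\{tz:t\in[-1,1]\}$ leaves $[-1,1]^n$ for $z\in\mathbb{T}^n$) and leave it open. The paper does not close this gap either---it simply attributes (3) to Visser without proof---so on this point your proposal is no less complete than the paper's own treatment.
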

Statement (1) is a standard Cauchy estimate. The second result is due to Klimek \cite{Klimek}, and the third one  due to Visser \cite{Visser}.
\\

 Let us turn to the proof of statement (4):
Denote by
$$T_{d}(x) = \sum_{m=0}^{d}{a_m(T_d)} x^{k},\quad x \in \mathbb{R}$$
Chebyshev's polynomial of degree $d$, and recall  that  Markov's numbers $M_{m,d}, 0 \leq m \leq d$ are given by
\begin{align} \label{explicit}
M_{m,d}:=\begin{cases} |a_m(T_d)| & \mbox{ if } m \equiv d  \,  (\mbox{mod } 2)\\
|a_m(T_{d-1})| & \mbox{ if } m \equiv d-1 \,  (\mbox{mod } 2)\,, \end{cases}
\end{align}
where
\[
|a_m(T_d)| = 2^{m-1} \, \frac{d'}{m!} \, \frac{\big( \frac{d'+m-2}{2}\big)!}{\big( \frac{d'-m}{2} \big)!}
\]
with $d' = d$ if  $m \equiv d  \,  (\mbox{mod } 2)$, and  $d' = d-1$ if  $m \equiv d-1 \,  (\mbox{mod } 2)$ (see, e.g., \cite[p.~56]{Natanson64} and also \eqref{Chebby}).
A throughout this article crucial point is the fact that by Markov's theorem (see \cite[p. 248]{Borwein12}) any real polynomial $p(t)=\sum_{m=0}^{d}{a_{m} t^{m}}$ of degree $d$ satisfies
\begin{equation}\label{equa:MarkovTheorem}
|a_{m}| \leq M_{m,d} \sup_{t \in [-1,1]}{|p(t)|}, \quad 0 \leq m \leq d,
\end{equation}
and implementing $T_d$ shows that this estimate  is actually optimal. Given a~polynomial $P$ on $[-1,1]^n$ with real coefficient (as in Proposition \ref{felix1},(4)), we consider for any $x \in \mathbb{R}^{n}$  the polynomial
\[
p_{x}(t) =P(t x_{1}, \ldots, t x_{n}) = \sum_{m=0}^{d}{t^{m} \, P_{m}(x)},\,\,\, t \in\mathbb{R}.
\]
Then, using \eqref{equa:MarkovTheorem} and taking the supremum on $x \in [-1,1]^{n}$,  we easily get that
\begin{align*}
\|P_m\|_{[-1,1]^n} \leq M_{m,d}\,\|P\|_{[-1,1]^n}.
\end{align*}
But the explicit formulas for $M_{m,d}$ from \eqref{explicit} combined with Stirling's formula show that
\begin{equation} \label{poz}
M =\limsup_{d \rightarrow \infty } \Big(\sup_{ m\leq d} \sqrt[d]{M_{m,d}}\Big)\,=\,\sup_{\substack{m,d \in \mathbb{N}\\ m\leq d}} \sqrt[d]{M_{m,d}}\,=\,1 + \sqrt{2}\,.
\end{equation}
This proves  the inequality in (4)  with constant  $(1 + \sqrt{2})^{d}$. Finally, if we assume that this inequality holds with some constant $C^d$, then inserting $T_d$ gives that  $M_{m,d} \leq C^d$ for all $m\leq d$, hence \eqref{poz}  gives $1 + \sqrt{2} \leq C$.\\

Let us note that, as observed in \cite{RealBHineq}, the upper estimate in \eqref{BaPeSeAA} in fact is an immediate consequence of  \eqref{BaPeSe1} combined with Lemma \ref{felix1}(3).
It remains to give the
\begin{proof}[Proof of Proposition \ref{BvB}, \eqref{BaPeSeAAA}] Take a real degree-$d$ polynomial $P$ on $[-1,1]^n$, and block it into its sum
$P(x) = \sum_{m=0}^d P_m(x)$ of homogeneous parts. Then we deduce from Minkowski's inequality
and Lemma \ref{felix1}(1) that
\begin{align*}
\Big(\sum_{\substack{\alpha \in \mathbb{N}_0^n\\|\alpha|
\leq d}}
|a_\alpha|^{\frac{2d}{d+1}}\Big)^{\frac{d+1}{2d}}
&
\leq  \sum_{m=0}^d \Big(\sum_{\substack{\alpha \in \mathbb{N}_0^n\\|\alpha|= m}}|a_\alpha|^{\frac{2m}{m+1}}\Big)^{\frac{m+1}{2m}}
\\&
\leq  \sum_{m=0}^d \BH_{\mathbb{T}}^{=m}\, \|P_m\|_{\mathbb{T}^n}
\leq  \sum_{m=0}^d \BH_{\mathbb{T}}^{=m}\, \|P\|_{\mathbb{T}^n}
\,,
\end{align*}
and then by Lemma \ref{felix1}(2) and equation \eqref{BaPeSe1} that
\begin{align*}
\Big(\sum_{\substack{\alpha \in \mathbb{N}_0^n\\|\alpha|
\leq d}}
|a_\alpha(P)|^{\frac{2d}{d+1}}\Big)^{\frac{d+1}{2d}}
&
\leq  \sum_{m=0}^d \BH_{\mathbb{T}}^{=m} \,\,(1+ \sqrt{2})^d\|P\|_{[-1,1]^n}
\\&
\leq C^{\sqrt{d \log d }} d(1+ \sqrt{2})^d \, \|P\|_{[-1,1]^n}
\,,
\end{align*}
which proves  the upper estimate. Conversely, applying this inequality to Chebyshev's polynomial $T_d$, we see that by \eqref{poz}
\[
1+ \sqrt{2}=\limsup_{d \rightarrow \infty} \Big(\sup_{m\leq d} \sqrt[d]{M_{m,d}}\Big) \leq \limsup_{d \rightarrow \infty} \sqrt[d]{\BH_{[-1,1]}^{\leq d}}\,,
\]
the conclusion.
\end{proof}

\subsection{Sidon constants}
Let us compare \eqref{equa:BHBoolean} and \eqref{original} within the setting of so-called Sidon sets.
Given $1 \leq p \leq \infty$ and a~compact abelian group $G$, a~finite subset $\Lambda$ of the dual group $\widehat{G}$ is said
to be a~$p$-Sidon set if there is a constant $C>0$ (depending on $p$ and $\Lambda$) such that for every trigonometric polynomial
$f = \sum_{\gamma \in \Lambda} \widehat{f}(\gamma) \gamma $ we have
\begin{equation}\label{equa:pSidonSetDefi}
\Big(\sum_{\gamma \in \Lambda}{|\widehat{f}(\gamma)|^{p}}\Big)^{\frac{1}{p}} \leq C \, \| f\|_{G},
\end{equation}
and the so-called Sidon constant $S_p(\Lambda)$ is the best possible $C$.
Here we are  mainly concerned with the groups $\mathbb{T}^n$, the $n$-dimensional polytorus,  and $\{\pm 1\}^n$,
the $n$-dimensional Boolean cube. Recall that  the dual group of $\mathbb{T}^n$ consists of all monomials $z^ \alpha$, $\alpha \in \mathbb{Z}^n$, whereas
the dual group of $\{\pm 1\}^n$ is formed by all monomials $x^S$, $S \subset [n]$. We concentrate on the following four sets of characters in the dual group of
$\mathbb{T}^n$ and $\{\pm 1\}^n$, respectively:
\[
\Omega_n^{=d} = \big\{z^\alpha \colon \alpha \in \mathbb{N}_0^n, \,|\alpha| =d  \big\} \,\,\, \,\text{and} \,\,\,\,\Omega_n^{\leq d} =  \big\{z^\alpha \colon \alpha \in \mathbb{N}_0^n, \,|\alpha| \leq d  \big\}\,,
\]
as well as
\[
\Lambda_n^{=d} = \big\{x^S \colon  S \subset [n], \,|S| =d  \big\} \,\,\,\, \text{and} \,\,\,\,\Lambda_n^{\leq d} = \big\{x^S \colon  S \subset [n], \,|S| \leq d  \big\}\,.
\]
In this terminology we have
\begin{align*}
\BH_{\{\pm 1 \}}^{\leq d} = \sup_n S_{\frac{2d}{d+1}}(\Lambda_n^{\leq d}) < \infty\,,
\end{align*}
and
\begin{align*}
\BH_{\mathbb{T}}^{\leq d} = \sup_n S_{\frac{2d}{d+1}}(\Omega_n^{\leq d}) < \infty\,,
\end{align*}
both  obvious reformulations of \eqref{equa:BHBoolean} and \eqref{original}.
Moreover, Theorem \ref{main}  states that there exists a~constant $C\ge 1$ such that for all $n,d$
\[
S_{\frac{2d}{d+1}}(\Lambda_n^{\leq d}) \leq C^{\sqrt{d \log d}}\,;
\]
we note again that by \eqref{BaPeSe1}  the analog for $S_{\frac{2d}{d+1}}(\Omega_n^{\leq d})$  holds true.

\section{The $d$-homogeneous case} \label{strategy}
Let us come to the proof of Theorem \ref{main}, the Boolean analog of \eqref{BaPeSe1}.
We first prove the $d$-homogeneous case:
There is an absolute constant $C> 0$ such that for every $d \in \N$
\begin{equation}\label{main-hom}
\BH_{\{\pm 1\}}^{= d} \leq C^{\sqrt{d\log{d}}}\,.
\end{equation}
Although Theorem \ref{main} of course covers this case we prefer to give an independent proof
since it is simpler and illustrates the general strategy. Anyway all proofs of Bohnenblust-Hille type inequalities
-- old ones or very recent ones -- are very much inspired by the  proof of the original estimate \eqref{original} which is itself a
 refinement of the proof of  Littlewood's famous $4/3$ inequality from \cite{L30}. Let us recall its four crucial steps:

\begin{itemize}
\item[{\bf S1}]
Given a $d$-homogeneous  polynomial $f(z) = \sum_{|\alpha|=d} a_\alpha z^\alpha$ in $n$ variables, understand  it as the restriction of a symmetric  $d$-linear form
$L_f$ on $\mathbb{C}^n$ to the diagonal $\Delta = \{(z, \ldots, z)\colon z \in \mathbb{C}^n\}$.
\item[{\bf S2}]
Interpret $L_f$ as a symmetric matrix $A = (c_{i_1, \ldots, i_m})_{i_1, \ldots, i_d =1}^n$,  relate  the $\ell_{2d/d+1}$-norm
of the coefficients of $f$ with the $\ell_{2d/d+1}$-norm of the  entries of $A$, and split this norm into mixed terms of the form
$$
\sum_{i_k} \big( \sum_{\substack{i_1, \ldots, i_{k-1}\\i_k, \ldots, i_{d}}} |c_{i_1, \ldots, i_d}|^{2} \big)^{1/2}\,.
$$
\item[{\bf S3}]
 Use Khinchine's inequality to estimate all of these mixed  terms by the supremum norm of $L_f$ on $(\mathbb{T}^n)^d$.
\item[{\bf S4}]
 Finally, use polarization to estimate $\|L_f\|_\infty$ by $\|f\|_\infty$.
\end{itemize}
For  the improvement of the constants, step-by-step non trivial refinements of the preceding arguments were needed.
Modifying several arguments from \cite{Annals} and \cite{BohrRadiusBPS} we now  sketch the proof of \eqref{main-hom}, and start to collect a few crucial ingredients which will
also be needed later for the  more involved proof of Theorem \ref{main}.

\subsection{Multilinear forms}
For each $n \in \N$ and finite set $A \subset \N$ we define the sets
\begin{align*}
\mathcal{I}(A,n) & := \left\{ \mathbf{i}: A \rightarrow [n] \mbox{ map} \right\}, \\
\mathcal{J}(A,n) & := \left\{ \mathbf{j}: A \rightarrow [n]  \mbox{ non-decreasing} \right\}.
\end{align*}
In case $A = [d]$ for some $d \in \N$, we denote the previous sets by $ \mathcal{I}(d,n)$ and $\mathcal{J}(d,n)$ repectively.  We also consider the equivalence relation on $\mathcal{I}(A,n)$ defined for $\mathbf{i}_{1}, \mathbf{i}_{2} \in \mathcal{I}(A,n)$ as
\[ \mathbf{i}_{1} \sim \mathbf{i}_{2} \, \mbox{ if there is a bijection $\sigma: A \rightarrow A$ such that } \mathbf{i}_{1} = \mathbf{i}_{2} \circ \sigma. \]
Note that for each $\mathbf{i} \in \mathcal{I}(A,n)$  there is a unique $\mathbf{j} \in \mathcal{J}(A,n)$ such that $\mathbf{j} \in [\mathbf{i}]$, where $[\mathbf{i}]$ stands for the equivalent class of $\mathbf{i}$.
Moreover the number of elements of the equivalence class of $\mathbf{i} \in \mathcal{I}(A,n)$ is
\begin{equation} \label{againN}
 |[\mathbf{i}]| = \frac{|A|!}{  |\{ k \colon \mathbf{i}(k) = 1 \}| ! \cdot \ldots \cdot |\{ k \colon \mathbf{i}(k) = n \}| !}.
 \end{equation}
If $A_{1}$ and $A_{2}$ are two disjoint subsets of $\mathbb{N}$, then the direct sum of $\mathbf{i}_{1} \in \mathcal{I}(A_{1},n)$ and $\mathbf{i}_{2} \in \mathcal{I}(A_{2}, n)$ is defined as the map
\[ \mathcal{I}(A_{1} \cup A_{2}, n) \, \ni \, \mathbf{i}_{1} \oplus \mathbf{i}_{2}: A_{1} \cup A_{2} \rightarrow [n]  \]
which coincides with $\mathbf{i}_{k}$ on $A_{k}$  for $k=1,2$. Note that every element of $\mathcal{I}(A_{1} \cup A_{2}, n)$ can be represented this way. This yields in particular, that if $S \subset [d]$ and we denote $\hat{S}:=[d] \setminus S$ (its complement in $[d]$), then
$
\mathcal{I}(d, n) = \mathcal{I}(S,n) \oplus \mathcal{I}(\hat{S},n).
$
However we can only assure that
$ \mathcal{J}(d,n) \subset \mathcal{J}(S,n) \oplus \mathcal{J}(\hat{S},n).$
The next result  (sometimes called Blei's inequality) appears with slightly different notation in \cite[Theorem 2.1]{BohrRadiusBPS}.
\begin{Prop}\label{Theo:Blei'sFormula1}
Let $n \in \N$ and $0 \leq m \leq d$ be integers. For any scalar matrix $(a_{\mathbf{i}})_{ \mathbf{i} \in \mathcal{I}(d, n)}$ we have
\[ \left( \sum_{\mathbf{i} \in \mathcal{I}(d, n)}{|a_{\mathbf{i}}|^{\frac{2d}{d+1}}} \right)^{\frac{d+1}{2d}} \leq \left[ \prod_{\substack{S \subset [d]\\ |S| = k}}{\left(  \sum_{\mathbf{i}_{1} \in \mathcal{I}(S, n)}{ \left(  \sum_{\mathbf{i}_{2} \in \mathcal{I}(\hat{S}, n)}{|a_{\mathbf{i}_{1} \oplus \mathbf{i}_{2}}|^{2}} \right)^{\frac{1}{2} \frac{2k}{k+1}} } \right)^{\frac{k+1}{2k} }} \right]^{\frac{1}{\binom{d}{k}}}\,.  \]
\end{Prop}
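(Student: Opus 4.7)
The plan is to prove the inequality by iterating Hölder's inequality, with the two-block version of Blei's inequality serving as the fundamental building block. Recall that the two-block version (the case $d=2$, $k=1$) asserts that for any scalar matrix $(b_{ij})_{i,j=1}^{n}$,
\begin{equation*}
\Big(\sum_{i,j} |b_{ij}|^{4/3}\Big)^{3/4} \leq \Big(\sum_{i} \big(\sum_{j} |b_{ij}|^{2}\big)^{1/2}\Big)^{1/2}\Big(\sum_{j} \big(\sum_{i} |b_{ij}|^{2}\big)^{1/2}\Big)^{1/2},
\end{equation*}
which follows from Cauchy--Schwarz together with a weighted Hölder step (writing $|b_{ij}|^{4/3} = |b_{ij}|\cdot|b_{ij}|^{1/3}$ and choosing weights so the "row" and "column" $\ell_{2}$-structures appear symmetrically).

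Setting $p_{k} = 2k/(k+1)$, I would recast the proposition as the mixed-norm inequality
\[
\|a\|_{p_{d}(\MM(d,n))}\;\leq\;\prod_{|S|=k}\,\|a\|_{p_{k}(\MM(S,n)),\,2(\MM(\hat{S},n))}^{1/\binom{d}{k}},
\]
and argue by downward induction on $k$. The base case $k = d$ is trivial since $\hat{S} = \emptyset$ and $\binom{d}{d} = 1$, reducing the right-hand side to $\big(\sum_\mathbf{i} |a_\mathbf{i}|^{p_{d}}\big)^{1/p_{d}}$. For the inductive step from $k+1$ to $k$, for each subset $S$ of size $k$ and each index $r \in \hat{S}$, I would apply the two-block inequality to split off the coordinate indexed by $r$: this converts the $\ell_{p_{k+1}}$-outer, $\ell_{2}$-inner norm associated to $S\cup\{r\}$ into the $\ell_{p_{k}}$-outer, $\ell_{2}$-inner norm associated to $S$. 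Taking geometric means over all pairs $(S,r)$ with $|S|=k$ and $r\in\hat{S}$, the counting identity that each subset $T$ of size $k+1$ arises in exactly $k+1$ such pairs gives precisely the correct averaging, and the arithmetic--geometric mean inequality redistributes the factors uniformly over the $\binom{d}{k+1}$ subsets of size $k+1$.

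Alternatively, one could attempt a single application of a generalized Hölder inequality with exponents $\binom{d}{k}$, one for each subset $S$ of size $k$, and then apply Cauchy--Schwarz inside over the $\hat{S}$-coordinates; this avoids induction but places the entire burden on a careful exponent count.

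The main obstacle in either approach is the delicate bookkeeping of the exponents $p_{k} = 2k/(k+1)$ and the combinatorial coefficients $\binom{d}{k}$: one must verify that after each Hölder/Cauchy--Schwarz step, the powers of $|a|$, the inner $\ell_{2}$-structure, and the outer $\ell_{p_{k}}$-structure recombine to reproduce exactly the mixed norm for the adjacent size. A subtler issue is that the inequality is claimed with sharp constant $1$, so each Hölder application must be performed with judiciously chosen weights to avoid spurious factors of $n$ or $d$.
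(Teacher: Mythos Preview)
The paper does not give its own proof of this proposition; it is quoted from \cite[Theorem~2.1]{BohrRadiusBPS}. The argument there is essentially your ``alternative'' route: a direct application of H\"older's inequality across the $\binom{d}{k}$ subsets, combined with Minkowski's inequality to interchange orders of summation, with the exponent bookkeeping carried out once.

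Your inductive scheme has the combinatorics right---the identity $(k{+}1)\binom{d}{k+1}=(d{-}k)\binom{d}{k}$ does redistribute the geometric mean exactly as needed---but there is a real gap in the analytic step. Unwinding your reduction, the passage from level $k{+}1$ to level $k$ requires, for every $(k{+}1)$-array $b$ (obtained by taking the $\ell_2$-norm over the $\hat T$-coordinates),
\[
\Big(\sum_{\mathbf{i}}|b_{\mathbf{i}}|^{p_{k+1}}\Big)^{1/p_{k+1}}
\;\le\;
\prod_{r=1}^{k+1}\Big(\sum_{\mathbf{i}'\in\mathcal{I}([k+1]\setminus\{r\},n)}\Big(\sum_{i_r}|b_{\mathbf{i}}|^{2}\Big)^{p_k/2}\Big)^{1/((k+1)p_k)}.
\]
This is \emph{not} the two-block Littlewood $4/3$ inequality you invoke, except when $k=1$: for $k\ge 2$ the outer exponent on the right is $p_k=2k/(k+1)>1$, whereas ``splitting off one coordinate'' via Littlewood produces an outer $\ell_1$. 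Worse, the displayed inequality is precisely the proposition itself in the special case $d\leadsto k{+}1$, so the induction is not grounded: the base case you declared ($k=d$, trivial) does not feed the step, and the ingredient you actually need at each stage is another nontrivial instance of the very statement you are proving. Establishing that instance again requires the H\"older--Minkowski argument with the specific exponents $p_k$, which is the full analytic content of the direct proof in \cite{BohrRadiusBPS}. So the induction does not genuinely reduce the problem to the $d=2$ case; you should carry out the direct argument (your second option) instead.
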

\noindent Every  $d$-homogeneous function $f:\{\pm 1\}^{n} \rightarrow \R$ has a unique representation
(see again \eqref{equa:FourierExpansionShort})
\begin{equation}
\label{equa:booleanFunctionIndexed}
f(x) = \sum_{\mathbf{j} \in \JJ(d,n)}{a_{\mathbf{j}} x_{\mathbf{j}}} \,,
\hspace{5mm} x_{\mathbf{j}} := x_{j_{1}} \cdot \ldots \cdot x_{j_{d}}\,,
\end{equation}
where $a_{\mathbf{j}} = 0$ if $\mathbf{j}$ is not injective (i.e., not strictly increasing). We can then look at $f$ as the restriction to $\{\pm 1\}^{n}$ of the $d$-homogeneous polynomial $P_{f}:\mathbb{R}^{n} \rightarrow \R$ given by
\begin{equation}
\label{equa:booleanPolynomialIndexed}
P_{f}(x) = \sum_{\mathbf{j} \in \JJ(d,n)}{a_{\mathbf{j}} x_{\mathbf{j}}}\, ,
\end{equation}
and as in \eqref{affine} we have
\begin{equation}
\label{Kanzi}
 \| P_{f}\| = \sup_{x \in [-1,1]^{n}}{|P_{f}(x)|} = \sup_{x \in \{\pm 1\}^{n}}{|P_{f}(x)|} = \| f\|_{\infty}.
 \end{equation}
The real polynomial $P_{f}$ has associated a unique symmetric $d$-linear form $L_{f}: (\R^{n})^{d} \rightarrow \mathbb{R}$ satisfying $P_{f}(x) = L_{f}(x, \ldots, x)$, which for $y^{(1)}, \ldots,y^{(d)}\in \mathbb{R}^n$
can be explicitly written as
\begin{equation}
\label{equa:symmetricMlinearIndexed}
L_f(y^{1}, \ldots, y^{d}) = \sum_{\mathbf{i} \in \mathcal{I}(d,n)}{c_{\mathbf{i}} \: y^{1}_{i_{1}} \cdot \ldots \cdot y^{d}_{i_{d}}}, \hspace{5mm} c_{\mathbf{i}} =\frac{a_{\mathbf{j}}}{d!} \hspace{4mm} \mbox{ whenever $\mathbf{i} \in [\mathbf{j}]$}.
\end{equation}

\subsection{Hypercontractivity}
For each $1 \leq p <  \infty$ the $L_{p}$-norm of the function $f: \{\pm 1\}^n \rightarrow \mathbb{R}$ is given by
\[
\| f\|_{p} = \mathbb{E}\big[|f|^{p} \big]^{1/p} = \mbox{$\left(\sum_{x \in \{ \pm 1\}^{n}}{\frac{1}{2^{n}}|f(x)|^{p}}\right)^{1/p}$}.
\]
A very important map is the noise operator defined for each $-1 < \rho < 1$ as the map $T_\rho$ which acts on each such  $f$  as
\[
T_{\rho}f(x) := \sum_{S \subset [n]}{\widehat{f}(S) \rho^{|S|} x^{S}}, \quad\, x\in \{\pm 1\}^{n}\,.
\]
An important feature of this operator is presented in the next result of Bonami and Gross (see, e.g.,  \cite[Chapter 9]{BooleanRyan}):
For any $1 < p \leq q \leq \infty$ and $\rho \leq \sqrt{\frac{p-1}{q-1}}$,
\[ \| T_{\rho}f\|_{q} \leq \| f\|_{p}. \]
 The following consequence   will be crucial: For for every  degree-$d$ function $f:\{ \pm 1\}^{n} \rightarrow \mathbb{R}$
\begin{equation} \label{hypo}
\left\| f  \right\|_{2} \leq \big(1/\sqrt{p-1}\big)^{d} \, \left\| f \right\|_{p}\,\,, \,\,\, 1 < p \leq 2,
\end{equation}
and
\begin{equation} \label{hypoX}
\left\| f  \right\|_{2} \leq e^d \, \left\| f \right\|_{1}\,.
\end{equation}

\subsection{Polarization I}
We will make use of the following polarization formula (inspired by its complex analog due to Harris  \cite[Theorem 1]{Harris}).

\begin{Prop} \label{Theo:polarizationReal}
Let $P: \R^{n} \rightarrow \mathbb{R}$ be a $d$-homogeneous polynomial, and  $L:(\mathbb{R}^{n})^{d} \rightarrow \mathbb{R}$ its  (unique) associated $d$-linear symmetric form. Then for every $1 \leq k \leq d$ and $x, y \in [-1,1]^{n}$
$$  |L(\overbrace{x, \ldots, x}^{\mbox{$k$}}, \overbrace{y, \ldots, y}^{\mbox{$d-k$}}) | \leq M_{k,d} \, \frac{d^{d}}{k^{k} (d-k)^{d-k}} \, \frac{k! \, (d-k)!}{d!} \, \| P\|_{[-1,1]^n} \,,  $$
where $M_{k,d}$ is the Markov number given in \eqref{explicit}.
\end{Prop}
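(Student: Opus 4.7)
The plan is to reduce the multivariate bound to a one-dimensional application of Markov's inequality~\eqref{equa:MarkovTheorem} by restricting $P$ to a suitably chosen affine line. Fix $x,y \in [-1,1]^n$ and introduce two positive parameters $\alpha,\beta$ with $\alpha+\beta \le 1$, whose values will be optimized at the end. Consider the real univariate polynomial $p(t) := P(\alpha t\, x + \beta y)$, which has degree at most $d$. For every $t \in [-1,1]$ and every coordinate $i$ one has $|\alpha t\, x_i + \beta y_i| \le \alpha|t| + \beta \le \alpha+\beta \le 1$, so $\alpha t\, x+\beta y \in [-1,1]^n$ and therefore $\|p\|_{[-1,1]} \le \|P\|_{[-1,1]^n}$.

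Expanding via the symmetric $d$-linear form $L$ associated to $P$ together with the binomial theorem yields
$$
p(t) = L\bigl(\alpha t\, x + \beta y,\ldots,\alpha t\, x + \beta y\bigr) = \sum_{j=0}^{d} \binom{d}{j}\alpha^{j}\beta^{d-j}\, L\bigl(\underbrace{x,\ldots,x}_{j},\underbrace{y,\ldots,y}_{d-j}\bigr)\, t^{j},
$$
so the coefficient of $t^{k}$ in $p$ is precisely $\binom{d}{k}\alpha^{k}\beta^{d-k}\, L(\underbrace{x,\ldots,x}_{k},\underbrace{y,\ldots,y}_{d-k})$. Applying Markov's theorem~\eqref{equa:MarkovTheorem} to the polynomial $p$ and comparing supremum norms forces
$$
\binom{d}{k}\alpha^{k}\beta^{d-k}\, \bigl|L(\underbrace{x,\ldots,x}_{k},\underbrace{y,\ldots,y}_{d-k})\bigr| \,\le\, M_{k,d}\,\|p\|_{[-1,1]} \,\le\, M_{k,d}\,\|P\|_{[-1,1]^n}.
$$

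It remains to optimize the scaling. A short weighted AM--GM (or Lagrange) argument shows that the function $\alpha^{k}\beta^{d-k}$ is maximized on the simplex $\alpha+\beta \le 1$, $\alpha,\beta \ge 0$ at $\alpha = k/d$, $\beta = (d-k)/d$, where it equals $k^{k}(d-k)^{d-k}/d^{d}$. Substituting this choice and using $1/\binom{d}{k} = k!\,(d-k)!/d!$ reproduces exactly the constant claimed in the proposition. There is no genuine analytic obstacle here; the only real design choice is the \emph{asymmetric} slicing $p(t) = P(\alpha t\, x + \beta y)$, whose two independent scale parameters $\alpha,\beta$ are what allow Markov's one-dimensional estimate to extract precisely the mixed symmetric value $L(\underbrace{x,\ldots,x}_{k},\underbrace{y,\ldots,y}_{d-k})$ rather than a mixture of several such values, while simultaneously keeping the argument of $P$ inside the cube $[-1,1]^n$ for all $t \in [-1,1]$.
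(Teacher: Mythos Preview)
Your proof is correct and follows essentially the same route as the paper: both restrict $P$ to the line through $\beta y$ in direction $\alpha x$, expand via the symmetric multilinear form, and apply Markov's inequality~\eqref{equa:MarkovTheorem} to the coefficient of $t^{k}$. The only cosmetic difference is that the paper fixes the scaling $g(t)=P(tkx+(d-k)y)$ upfront and then uses $d$-homogeneity to bound $\sup_{|t|\le 1}|g(t)|\le d^{d}\|P\|_{[-1,1]^n}$, whereas you keep the argument inside the cube from the start and optimize over $\alpha,\beta$ at the end; your optimal choice $\alpha=k/d$, $\beta=(d-k)/d$ is exactly (after rescaling by homogeneity) the paper's choice.
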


\begin{proof}
Let $x, y \in [-1,1]^{n}$. Consider the degree-$d$  (real) polynomial
\[ g(t) = P\left(t \, k \, x + (d-k) \, y \right). \]
Using the multinomial formula, we  write
$$
g(t) = \sum_{j=0}^{d}{\binom{d}{j} \, t^{j} k^{j} (d-k)^{d-j} \, L(\overbrace{x, \ldots, x}^{\mbox{$j$}}, \overbrace{y, \ldots, y}^{\mbox{$d-j$}})},
$$
and hence by Markov's inequality  \eqref{equa:MarkovTheorem} we deduce that
$$
k^{k} (d-k)^{d-k} \binom{d}{k} |L(\overbrace{x, \ldots, x}^{\mbox{$k$}}, \overbrace{y, \ldots, y}^{\mbox{$d-k$}})| \leq M_{k,d} \, \sup_{t \in [-1,1]}{|g(t)|} \leq M_{k,d} \, d^{d} \| P\|,
$$
which leads to the desired conclusion.
\end{proof}

\subsection{Proof of the homogeneous case}
For the proof of  \eqref{main-hom} let $f:\{\pm 1\}^{n} \rightarrow \R$ be an $d$-homogeneous function with $d \geq 2$ (the case $d = 1$ is anyway clear). We rewrite it in the form \eqref{equa:booleanFunctionIndexed},
$$ f(x) = \sum_{\mathbf{j} \in \JJ(d,n)}{a_{\mathbf{j}} x_{\mathbf{j}}}\,,$$
where $a_{\mathbf{j}} = 0$ whenever $\mathbf{j}$ is not strictly increasing,
and extend the definition of the $a_{\mathbf{j}}$ to all of $\MM(d,n)$ by $a_{\mathbf{i}} = a_{\mathbf{j}}$ if $\mathbf{i} \in [\mathbf{j}]$. Moreover, for the coefficients $c_{\mathbf{i}}$ of the unique symmetric form  $L_f$
from \eqref{equa:symmetricMlinearIndexed} we have $c_{\mathbf{i}} =\frac{a_{\mathbf{i}}}{d!}$. Then by  Proposition \ref{Theo:Blei'sFormula1} ($a_{\mathbf{i} \oplus \mathbf{j}} =0$
whenever $\mathbf{i}$ or $\mathbf{j}$ are not strictly increasing)
\begin{align*}
\Big(\sum_{\mathbf{i} \in \JJ(d,n)}{|a_{\mathbf{i}}|^{\frac{2d}{d+1}}}& \Big)^{\frac{d+1}{2d}} & \leq \Big( \prod_{\substack{S \subset [d]\\ |S| = k}}{\Big( \sum_{\mathbf{i} \in \JJ(S,n)}{\Big( \sum_{\mathbf{j} \in \JJ(\hat{S},n)}{|a_{\mathbf{i} \oplus \mathbf{j}}|^{2}} \Big)^{\frac{1}{2} \frac{2k}{k+1}}} \Big)^{\frac{k+1}{2k}}} \Big)^{\frac{1}{\binom{d}{k}}}\,.
\end{align*}
Fix $S \subset [d]$ with $|S| = k$ in order to
 estimate each of these factors. If we denote $\rho_{r} := (r-1)^{-\frac{1}{2}}$, then using  \eqref{againN} and \eqref{hypo}, we have
\begin{align*}
\Big( \sum_{\mathbf{i} \in \JJ(S,n)}\Big( \sum_{\mathbf{j} \in \JJ(\hat{S},n)}&{|a_{\mathbf{i} \oplus \mathbf{j}}|^{2}} \Big)^{\frac{1}{2} \frac{2k}{k+1}} \Big)^{\frac{k+1}{2k}}
  \\
  &
  \leq \rho_{\frac{2k}{k+1}}^{d-k} \left( \sum_{\mathbf{i} \in \JJ(S,n)}
  \mathbb{E}_{y} \left[\Big| \sum_{\mathbf{j} \in \JJ(\widehat{S}, n)}{a_{\mathbf{i} \oplus \mathbf{j}} y_{\mathbf{j}}} \Big|^{\frac{2k}{k+1}}  \right]\right)^{\frac{k+1}{2k}}\\
& = \rho_{\frac{2k}{k+1}}^{d-k}
\,\,\,
   \left(\mathbb{E}_{y} \left[
  \sum_{\mathbf{i} \in \JJ(S,n)} \Big| \sum_{\mathbf{j} \in \JJ(\widehat{S}, N)}{a_{\mathbf{i} \oplus \mathbf{j}} y_{\mathbf{j}}} \Big|^{\frac{2k}{k+1}}
  \right]\right)^{\frac{k+1}{2k}}
\\
& =  \rho_{\frac{2k}{k+1}}^{d-k}\, \frac{d!}{(d - k)!}\,
\left(   \mathbb{E}_{y} \left[\sum_{\mathbf{i} \in \JJ(S,N)}{\Big| \sum_{\mathbf{j} \in \mathcal{I}(\widehat{S}, n)}{c_{\mathbf{i} \oplus \mathbf{j}} y_{\mathbf{j}}} \Big|^{\frac{2k}{k+1}}} \right]\right)^{\frac{k+1}{2k}}
\\
&  \leq \rho_{\frac{2k}{k+1}}^{d-k}\,\frac{d!}{(d - k)!}\, \sup_{y \in \{\pm 1\}^n}{\left( \sum_{\mathbf{i} \in \JJ(S,n)}{\Big| \sum_{\mathbf{j} \in \mathcal{I}(\widehat{S}, n)}{c_{\mathbf{i} \oplus \mathbf{j}} y_{\mathbf{j}}} \Big|^{\frac{2k}{k+1}}} \right)^{\frac{k+1}{2k}}}\,.
\end{align*}
Then by  the definition of $\BH_{\{\pm 1\}}^{=k}$ and Proposition \ref{Theo:polarizationReal} we  for every $y \in \{\pm 1\}^n$ get
\begin{align*}
\Big( \sum_{\mathbf{i} \in \JJ(S,n)}{\Big| \sum_{\mathbf{j} \in \mathcal{I}(\widehat{S}, n)}{c_{\mathbf{i} \oplus \mathbf{j}} y_{\mathbf{j}}} \Big|^{\frac{2k}{k+1}}} \Big)^{\frac{k+1}{2k}} & \leq \BH_{\{\pm 1\}}^{=k}\, \sup_{x \in \{\pm 1\}^n}{\Big| \sum_{i \in \JJ(S,n)}{\sum_{j \in \mathcal{I}(\widehat{S},n)}{c_{\mathbf{i} \oplus \mathbf{j}} x_{\mathbf{i}} y_{\mathbf{j}}}} \Big|}\\
& \hspace{-6mm} = \BH_{\{\pm 1\}}^{=k}\, \frac{1}{k!} \, \sup_{x \in \{\pm 1\}^n}{\Big| \sum_{i \in \mathcal{I}(S,n)}{\sum_{j \in \mathcal{I}(\widehat{S},n)}{c_{\mathbf{i} \oplus \mathbf{j}} x_{\mathbf{i}} y_{\mathbf{j}}}} \Big|}\\
& \hspace{-6mm} \leq \BH_{\{\pm 1\}}^{=k} \, \frac{1}{k!} \, \sup_{x,y \in \{\pm 1\}^n}{|L_f(\overbrace{x, \ldots, x}^{\mbox{$k$}}, \overbrace{y, \ldots, y}^{\mbox{$d-k$}})|}\\
& \hspace{-6mm} \leq \BH_{\{\pm 1\}}^{=k} \, M_{k,d} \, \frac{d^{d}}{k^{k} (d-k)^{d-k}} \, \frac{(d-k)!}{d!} \, \|f\|_{\infty}\,.
\end{align*}
Therefore,
$$
\Big( \sum_{\mathbf{i} \in \JJ(S,n)}{\Big( \sum_{\mathbf{j} \in \JJ(\hat{S},n)}{|c_{\mathbf{i} \oplus \mathbf{j}}|^{2}} \Big)^{\frac{1}{2} \frac{2k}{k+1}}} \Big)^{\frac{k+1}{2k}}
\leq  \BH_{\{\pm 1\}}^{=k} \,\rho_{\frac{2k}{k+1}}^{d-k} \, M_{k,d} \, \frac{d^{d}}{k^{k} \, (d-k)^{d-k}} \| f\|_{\infty} ,
$$
which leads to
$$ \BH_{\{\pm 1\}}^{=d} \leq \BH_{\{\pm 1\}}^{=k} \,\rho_{\frac{2k}{k+1}}^{d-k} \, M_{k,d} \, \frac{d^{d}}{k^{k} \, (d-k)^{d-k}}. $$
Replacing the value of $\rho_{\frac{2k}{k+1}}$ and using \eqref{poz}, we get that
for $1 < k < d$,
\begin{align*}
 \BH_{\{\pm 1\}}^{=d} & \leq  \BH_{\{\pm 1\}}^{=k} \Big( \frac{k+1}{k-1} \Big)^{\frac{d-k}{2}}
 (1 + \sqrt{2})^k
  \frac{d^{d}}{k^{k} \, (d-k)^{d-k}}\\
& =  \BH_{\{\pm 1\}}^{=k} \, \Big( 1 + \frac{2}{k-1} \Big)^{\frac{d-k}{2}}
 (1 + \sqrt{2})^k \,\,
  \frac{d^k}{k^k} \Big( 1 + \frac{k}{d-k} \Big)^{d-k}\\
& \leq  \BH_{\{\pm 1\}}^{=k}  \exp{\left(  \frac{d-k}{k-1}\right)}
(1 + \sqrt{2})^k
\exp{\left(k \log{\frac{d}{k}}\right)} \exp{k}
\,,
\end{align*}
where  we have used the fact that $(1 + 1/x)^{x}$ is increasing and bounded by $e$ for positive $x$. Taking $k\approx\sqrt{d/\log{d}}$ and using that $\BH_{\{\pm 1\}}^{=k}$ is non-decreasing in $k$, we obtain
for some $\alpha >0$
\[ \BH_{\{\pm 1\}}^{=d} \leq \BH_{\{\pm 1\}}^{=[\sqrt{d}]} \exp{\big( \alpha \sqrt{d \log{d}} \big)}. \]
Iterating this process yields
\begin{align*}
 \BH_{\{\pm 1\}}^{=d}& \leq e^{\big( \alpha\sqrt{d \log{d}} \big)}  e^{\big( \alpha\sqrt[4]{d} \frac{1}{\sqrt{2} } \sqrt{\log{d}}  \big)} \BH_{\{\pm 1\}}^{=[\sqrt[4]{d}]} \\
& \leq \exp{\Big( \alpha \sqrt{d} \sqrt{\log{d}}\,\Big( 1 + \frac{1}{\sqrt{2}} + \frac{1}{(\sqrt{2})^{2}} + \ldots \Big) \Big)}\BH_{\{\pm 1\}}^{=2}.
\end{align*}
Finally, note that $\BH_{\{\pm 1\}}^{=2} < \infty$ which is a consequence of  \eqref{affine} and \eqref{BaPeSeAA} . \qed

\section{The degree-$d$ case} \label{get in}
Equation \eqref{main-hom} is a result on  $d$-homogeneous tetrahedral polynomials, but we want to pass now to the case of degree-$d$ functions. Let us recall that in the complex case this step is easily done via the trick that was described after  \eqref{<d=d}, but which is not available in the real setting. Another way  (which also works in the complex case) is to split the Fourier expansion of the degree-$d$ function $f$ into its $m$-homogenous parts $f_{m}$ so that
\[
\Big(\sum_{|S| \leq d}{|\widehat{f}(S)|^{\frac{2d}{d+1}}}\Big)^{\frac{d+1}{2d}} \leq C^{\sqrt{d \log{d}}} \sum_{m=0}^{d}{\| f_{m}\|_{\infty}}\,,
\]
and then try to control the norm of $f_{m}$ by the norm of the full function $f$. However, the best estimation we know for tetrahedral polynomials comes from the general estimation from Lemma \ref{felix1} (4) valid for every real polynomial. But this constant in fact has  exponential growth on the degree (in contrast to the complex case in which one gets constant $1$ by Lemma \ref{felix1} (1)). One may wonder whether this constant can be improved in the tetrahedral case. But considering the majority function $\Maj_{d}:\{ \pm 1\}^{d} \rightarrow \{ \pm 1\}, \, \Maj_{d}(x) = \sign (x_1+\ldots, x_d) $ one can easily check that the norm of its $m$-homogeneous part ($m$ odd) satisfies
\[ \|(\Maj_{d})_{m}\|_{\{ \pm 1\}^{d}} = \binom{\frac{d-1}{2}}{\frac{m-1}{2}} \frac{d}{m} \frac{1}{2^{d-1}} \binom{d-1}{\frac{d-1}{2}} \geq \binom{\frac{d-1}{2}}{\frac{m-1}{2}} \frac{\sqrt{d}}{m}\,, \]
so taking $m=(d-1)/2$ we have  $\|(\Maj_{d})_{m}\|_{\infty} = \Omega(2^{d/2} /\sqrt{d})$.
See the problem posed in section \ref{get out}.\\

All of these problems force us to
add to the classical strategy, described in the preceding section, some apparently new techniques in order to prove that in the tetrahedral case we can still  get constants $\BH_{\{\pm 1\}^n}^{\leq d}$ that are  subexponential as in the complex case \eqref{BaPeSe1}. In contrast to Proposition \ref{BvB} the key is that now  Fourier analysis on the Boolean cube $\{\pm 1\}^n$  is at our disposal -- as in the complex case we have the polytorus $\mathbb{T}^n$.

\subsection{Multiaffine forms} \label{tetra}
Let $Q: \mathbb{R}^{n} \rightarrow \mathbb{R}$ be a polynomial of degree-$d$. Recall that $Q$ admits a unique representation known as its monomial expansion: There is a unique
 family of scalars $a_{j_{1} \ldots j_{m}}$ such that for  each $x \in \mathbb{R}^{n}$
\begin{equation}\label{equa:monomialExpansion1}
Q(x) = \sum_{m=0}^{d}{\sum_{1 \leq j_{1} \leq \ldots \leq j_{m} \leq n}{a_{j_{1} \ldots j_{m}} x_{j_{1}} x_{j_{2}} \ldots x_{j_{m}} }}\,.
\end{equation}
 For each $0 \leq m \leq d$, the $m$-homogeneous part of $Q$ is given then by
\[ Q_{m}(x) = \sum_{1 \leq j_{1} \leq \ldots \leq j_{m} \leq n}{a_{j_{1} \ldots j_{m}} x_{j_{1}} \ldots x_{j_{m}}}, \hspace{3mm} x \in \mathbb{R}^{n}. \]
We say that $Q$ is a tetrahedral  polynomial if it has the form
\begin{equation}\label{equa:monomialExpansionTetrahedral1}
Q(x) = \sum_{m=0}^{d}{\sum_{1 \leq j_{1} < \ldots < j_{m} \leq n}{a_{j_{1} \ldots j_{m}} x_{j_{1}} x_{j_{2}} \ldots x_{j_{m}} }},\hspace{4mm} x \in \mathbb{R}^{n}.
\end{equation}
In other words, tetrahedral polynomials are characterized for being affine (a linear map plus a constant) in each variable $x_{j} \in \mathbb{R}$. This is equivalent to say that $Q(x)$ preserves convex combinations in each variable, i.e., if $\mathbf{x}_{1}, \ldots, \mathbf{x}_{n}$ are independent (real) random variables, then
\begin{equation}\label{equa:RandomVatetrahedral}
\mathbb{E}[Q(\mathbf{x}_{1}, \ldots, \mathbf{x}_{n})] = Q(\mathbb{E}[\mathbf{x}_{1}], \ldots, \mathbb{E}[\mathbf{x}_{n}]).
\end{equation}
As we mentioned in the introduction, the importance of these polynomials for us is that there is a~natural (isometric) correspondence between them and real-valued functions on the Boolean cube. Indeed, if for a given function $f:\{ \pm 1\}^{n} \rightarrow \mathbb{R}$ of degree $d$ we put
\begin{equation}\label{equa:coefficientsTetrahedralBoolean}
a_{j_{1} \ldots j_{m}} := \widehat{f}(S) \hspace{3mm} \mbox{ where } \hspace{3mm} S= \{ j_{1} < \ldots < j_{m}\},\quad\, 0 \leq m \leq d\,,
\end{equation}
then the Fourier-Walsh expansion \eqref{equa:FourierExpansionShort} of $f$ can be rewritten as
\[
f(x) = \sum_{m=0}^{d}{\sum_{1 \leq j_{1} < \ldots < j_{m} \leq n}{a_{j_{1} \ldots j_{m}} x_{j_{1}} x_{j_{2}} \ldots x_{j_{m}} }},\hspace{4mm} x \in \{ \pm 1\}^{n}.
\]
We denote by $Q_{f}$ the unique tetrahedral polynomial which coincides with $f$ on $\{ \pm 1\}^{n}$, with their respective monomial and Fourier coefficients related via \eqref{equa:coefficientsTetrahedralBoolean}.\\

Following \cite[pp. 1065--66]{KwapienDecoupling}, every polynomial $Q: \mathbb{R}^{n} \rightarrow \mathbb{R}$ of degree $d$ has a~unique $d$-form $L_{Q}:(\mathbb{R}^{n})^{d} \rightarrow \mathbb{R}$ satisfying the following properties:
\begin{enumerate}
\item[(I)] $L_{Q}$ is (separately) $d$-affine, i.e., it is an affine map in each variable $x^{(j)} \in \mathbb{R}^{n}$.
\item[(II)] $L_{Q}$ is symmetric, i.e., the value of $L_{Q}(x^{(1)}, \ldots, x^{(d)})$ is invariant by permutations of its coordinates

$x^{(1)}, \ldots, x^{(d)}$.
\item[(III)] $L_{Q}(x, \ldots, x) = Q(x)$ for every $x \in \mathbb{R}^{n}$.
\end{enumerate}
For each $x^{(1)}, \ldots, x^{(d)} \in \mathbb{R}^{n}$ this form can be explicitly written via the polarization formula
\begin{align*}
L_{Q}(x^{(1)}, & \ldots, x^{(d)}) \\
& = \mathbb{E}_{\xi}\left[ \sum_{m=0}^{d}{\frac{\xi_{1} \ldots \xi_{d}}{d!}(\xi_{1}
+ \ldots + \xi_{n})^{d-m} Q_{m}{(\xi_{1} x^{(1)} + \ldots + \xi_{d} x^{(d)})}}\right],
\end{align*}
where we are considering expectation over all $\xi \in \{ \pm 1\}^{d}$. Let us notice a couple of observations:
\begin{enumerate}
\item[(i)] If $Q$ is $d$-homogeneous, then $L_{Q}$ is the usual $d$-linear form associated to it
(see \eqref{equa:symmetricMlinearIndexed}).
\item[(ii)] For $f:\{ \pm 1\}^{n} \rightarrow \mathbb{R}$ we will denote by $L_{f}$ the $d$-form associated to its tetrahedral polynomial $Q_{f}$. This form also appears in computer science under the name of (fully) decoupled version of $f$  and block multilinear form; see e.g \cite{RyanZhao16}.
\end{enumerate}

\subsection{Polarization II}
\label{sec:PolynomialsAffineNorms}
The following apparently  new result will be crucial for us. It is an analog of Harris' polarization formula for complex homogeneous polynomials and its real variant from Proposition \ref{Theo:polarizationReal}.  Our  case
is much more limited, since for a~degree-$d$ polynomial $Q$ its associated multiaffine form $L_{Q}$ in each variable just preserves convex combinations and not linear ones. This forces us to look  for a substantially different approach.

\begin{Prop}\label{Prop:estimationTwoBlockDecoupledMain}
Let $Q:\mathbb{R}^{n} \rightarrow \mathbb{R}$ be a polynomial of degree $d$ and $L_{Q}$ its $d$-affine associated form. Then, for each $0 \leq m \leq d/2$ we have that
\begin{equation}\label{equa:estimationTwoBlockDecoupledMain}
\sup_{x,y \in [-1,1]^{n}}{\big| L_{Q}(\underbrace{x, \ldots, x}_{m},
\underbrace{y, \ldots,y}_{d-m}) \big|} \leq  2 d^{m} \sup_{x \in [-1,1]^{n}}{|Q(x)|}.
\end{equation}
\end{Prop}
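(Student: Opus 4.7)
\medskip

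The plan is to reduce the multivariate bound to a one-variable polynomial inequality via the Bernstein--Bézier basis on $[0,1]$, and then extract $h_m := L_{Q}(\underbrace{x,\ldots,x}_m,\underbrace{y,\ldots,y}_{d-m})$ from the univariate polynomial by V.~A.~Markov's inequality. Fix $x,y \in [-1,1]^n$ and set
$$ g(t) \;:=\; Q\bigl((1-t)y + tx\bigr), \qquad t \in \mathbb{R}. $$
Then $g$ has degree at most $d$, and since $(1-t)y + tx \in [-1,1]^n$ for $t \in [0,1]$ (convex combination), $\|g\|_{[0,1]} \le \|Q\|_{[-1,1]^n}$.

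Since $L_{Q}$ is symmetric and $d$-affine, and since $(1-t) + t = 1$ is an affine combination, multi-affinity allows the ``binomial expansion''
$$ g(t) \;=\; L_{Q}\bigl((1-t)y+tx,\ldots,(1-t)y+tx\bigr) \;=\; \sum_{k=0}^{d}\binom{d}{k}\,t^{k}(1-t)^{d-k}\,h_{k}, $$
valid for all real $t$. In other words, the sequence $(h_{k})_{k=0}^{d}$ is exactly the Bernstein--Bézier coefficient vector of $g$ on $[0,1]$. Expanding each basis function in the monomial basis and inverting the resulting triangular relation gives the extraction formula
$$ h_{m} \;=\; \sum_{j=0}^{m}\binom{m}{j}\,\frac{(d-j)!}{d!}\,g^{(j)}(0). $$

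Now $g$ is a polynomial of degree $\le d$ on $[0,1]$ with $\|g\|_{[0,1]} \le \|Q\|_{[-1,1]^{n}}$. Applying V.~A.~Markov's inequality after the affine rescaling from $[0,1]$ to $[-1,1]$ yields
$$ |g^{(j)}(0)| \;\le\; 2^{j}\, T_{d}^{(j)}(1)\,\|Q\|_{[-1,1]^{n}}, $$
and a direct simplification using the explicit formula for $T_{d}^{(j)}(1)$ gives
$\tfrac{(d-j)!}{d!}T_{d}^{(j)}(1) = \tfrac{d(d+1)\cdots(d+j-1)}{(2j-1)!!}$. Substituting leads to
$$ |h_{m}| \;\le\; \|Q\|_{[-1,1]^{n}}\cdot\sum_{j=0}^{m}\binom{m}{j}\,\frac{2^{j}\,d(d+1)\cdots(d+j-1)}{(2j-1)!!}. $$

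The last step is to show that, under the hypothesis $m \le d/2$, this combinatorial sum is at most $2\,d^{m}$. Here the constraint $m \le d/2$ is used in two ways: it keeps each product $d(d+1)\cdots(d+j-1)$ comparable to $d^{j}$ (so the ``tail'' above the $j=m$ term is controlled), and it ensures that the dominant ratio $2^{j}/(2j-1)!!$---which is maximal at $j=1$ with value $2$ and decreases thereafter---caps the overall constant at $2$. The main obstacle, and the delicate point of the proof, is precisely this last step: squeezing the numerical constant down to $2$ rather than the naive $2^{m}$ or $(2d)^{m}$ bounds one gets by estimating each factor separately. Any looser estimate in Markov's inequality would spoil the clean exponent $d^{m}$ needed to combine with the Bohnenblust--Hille iteration from Section~\ref{strategy}.
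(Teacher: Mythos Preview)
Your reduction is exactly the paper's: fix $x,y$, form the one-variable polynomial $g(t)=Q((1-t)y+tx)$, observe that its Bernstein--B\'ezier coefficients are the numbers $h_k$, and invoke Chebyshev extremality. Your extraction formula
\[
h_m=\sum_{j=0}^m\binom{m}{j}\frac{(d-j)!}{d!}\,g^{(j)}(0)
\]
is correct, and the rescaled V.~A.~Markov bound $|g^{(j)}(0)|\le 2^{j}T_d^{(j)}(1)\|g\|_{[0,1]}$ is also correct. The gap is in the very last step, which you yourself flag as ``delicate'': after applying the triangle inequality termwise your sum
\[
\sum_{j=0}^{m}\binom{m}{j}\,\frac{2^{j}\,d(d+1)\cdots(d+j-1)}{(2j-1)!!}
\]
is \emph{not} bounded by $2d^{m}$. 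Already for $m=1$ it equals $1+2d>2d$, and for $m=2$, $d=4$ it is $131/3>32=2d^{2}$. The reason is that the true extremal value $|h_m(\tilde T_d)|$ involves the alternating signs $\tilde T_d^{(j)}(0)=(-1)^{d-j}2^{j}T_d^{(j)}(1)$, and the cancellation is destroyed by your use of the triangle inequality.

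The paper circumvents this by proving a sharp Markov-type theorem directly for the basis $\psi_{d,k}(t)=\bigl(\tfrac{1+t}{2}\bigr)^{k}\bigl(\tfrac{1-t}{2}\bigr)^{d-k}$ (its Proposition~\ref{Theo:MarkovTypeTheoremPsi}): for any degree-$d$ polynomial $P$ with $P=\sum_k a_k(P)\psi_{d,k}$, one has $|a_k(P)|\le |a_k(T_d)|\,\|P\|_{[-1,1]}$. This keeps the signed combination intact and yields the sharp constant $|a_m(T_d)|/\binom{d}{m}$, which the paper then estimates by $2d^{m}$ via a short Stirling argument. If you want to stay with your derivative route, you must bound the \emph{linear functional} $g\mapsto h_m(g)$ in one stroke rather than each $g^{(j)}(0)$ separately --- and that is precisely what Proposition~\ref{Theo:MarkovTypeTheoremPsi} does. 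Alternatively, your sum can be shown to be $\le C\,d^{m}$ for some absolute $C>2$, which would suffice for the downstream iteration in Theorem~\ref{main} but not for the proposition as stated.
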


Before starting the  proof, we need an auxiliary result on interpolation which, as the classical Markov's theorem
from  \eqref{equa:MarkovTheorem}, shows another extremal property of the coefficients of the Chebychev polynomials. In contrast to Markov's result, we are going to consider a different basis for the space of degree-$d$ polynomials. For each $0 \leq m \leq d$ let
\[ \psi_{d,m}(t) = 2^{-d} (1 + t)^{m} (1- t)^{d-m} =  \left( \frac{1+t}{2} \right)^{m} \left( \frac{1-t}{2} \right)^{d - m}\,.  \]

\begin{Prop}\label{Theo:MarkovTypeTheoremPsi}
Let $Q(t)$ be a degree-$d$ polynomial. Then there exist unique scalars $a_{n} = a_{n}(Q)$ $(0 \leq n \leq d)$ such that
\[ Q(t) = \sum_{n=0}^{d}{a_{n} \, \psi_{d,n} (t)}, \quad\, t \in \mathbb{R}.
\]
Moreover, for each $0 \leq n \leq d$
\begin{equation}\label{equa:main1MarkovTypeTheoremPsi}
|a_{n}(Q)| \leq |a_{n}(T_{d})| \, \sup_{t \in [-1,1]}{|Q(t)|}\,,
\end{equation}
where $T_{d}$ is the Chebyshev polynomial of degree-$d$. In particular, we have the optimal estimation
\begin{equation}\label{equa:main2MarkovTypeTheoremPsi}
|a_{n}(Q)| \leq \sum_{m=0}^{\min{\{ d-n, n\}}}{4^{m} \binom{d}{2m} \binom{d-2m}{n-m}}  \cdot \sup_{t \in [-1,1]}{|Q(t)|}.
\end{equation}
\end{Prop}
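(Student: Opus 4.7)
The plan is to first verify that $\{\psi_{d,n}\}_{n=0}^{d}$ is a basis of the space of polynomials of degree at most~$d$. Under the affine substitution $s=(1+t)/2$, $\psi_{d,n}(t)$ becomes $s^{n}(1-s)^{d-n}$, the (unnormalized) Bernstein basis, which is classically known to be a basis of that space. This yields at once existence and uniqueness of the expansion $Q(t)=\sum_{n}a_{n}\psi_{d,n}(t)$.

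Next I would compute $a_{n}(T_{d})$ explicitly. Parametrizing $t=\cos\theta$ one checks $\psi_{d,n}(\cos\theta)=\cos^{2n}(\theta/2)\sin^{2(d-n)}(\theta/2)$. Expanding
\[
\cos(d\theta)=\operatorname{Re}\bigl((\cos(\theta/2)+i\sin(\theta/2))^{2d}\bigr)
\]
by the binomial theorem, keeping the terms with even powers of~$i$ (for which $i^{2m}=(-1)^{m}$), and reindexing by $n=d-m$ yields
\[
T_{d}(t)=\sum_{n=0}^{d}(-1)^{d-n}\binom{2d}{2n}\psi_{d,n}(t),
\]
so $a_{n}(T_{d})=(-1)^{d-n}\binom{2d}{2n}\neq 0$.

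For inequality~\eqref{equa:main1MarkovTypeTheoremPsi}, I would exploit that the $d+1$ Chebyshev extrema $t_{j}=\cos(j\pi/d)$, $j=0,\ldots,d$, form a unisolvent set for polynomials of degree~$\le d$; hence there exist unique scalars $c_{n,j}$ with $a_{n}(Q)=\sum_{j=0}^{d}c_{n,j}\,Q(t_{j})$ for every such~$Q$. Since $T_{d}(t_{j})=(-1)^{j}$, this gives $a_{n}(T_{d})=\sum_{j}(-1)^{j}c_{n,j}$. The critical step is to prove that $(-1)^{j}c_{n,j}$ has constant sign in~$j$; granted this, $|a_{n}(T_{d})|=\sum_{j}|c_{n,j}|$, and the triangle inequality $|a_{n}(Q)|\le\sum_{j}|c_{n,j}|\,\|Q\|_{[-1,1]}$ yields~\eqref{equa:main1MarkovTypeTheoremPsi}. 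Finally~\eqref{equa:main2MarkovTypeTheoremPsi} follows from the combinatorial identity
\[
\binom{2d}{2n}=\sum_{m=0}^{\min\{n,\,d-n\}}4^{m}\binom{d}{2m}\binom{d-2m}{n-m},
\]
proved by partitioning $2d$ objects into $d$ pairs and classifying $2n$-element subsets according to the number~$m$ of pairs from which exactly one element is drawn (the factor $4^{m}$ counts the $2$ choices inside each of the $m$ split pairs and the pairs of fully-chosen/fully-discarded are counted by $\binom{d-2m}{n-m}$).

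The main obstacle is the sign-alternation claim for the coefficients~$c_{n,j}$. I would attack this via the Chebyshev/Kolmogorov extremum principle: the finite-dimensional extremal problem $\sup\{a_{n}(Q)\colon\|Q\|_{[-1,1]}\le 1\}$ is attained, and a standard equioscillation argument forces the extremizer to attain $\pm 1$ at $d+1$ alternating points of $[-1,1]$; the only polynomial of degree~$\le d$ with that property is $\pm T_{d}$, which yields both the required sign pattern of $c_{n,j}$ and the extremality $|a_{n}(Q)|\le|a_{n}(T_{d})|\,\|Q\|_{[-1,1]}$. An alternative route, which bypasses equioscillation, is to exploit the total positivity of the Bernstein evaluation matrix $(\psi_{d,m}(t_{j}))_{m,j}$ and directly read off the sign-regular pattern of its inverse.
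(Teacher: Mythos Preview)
Your overall plan is sound, and several ingredients are cleaner than the paper's. Recognizing $\psi_{d,n}$ as the shifted Bernstein basis disposes of existence/uniqueness in one line, and your trigonometric computation $a_n(T_d)=(-1)^{d-n}\binom{2d}{2n}$ via $\cos(d\theta)=\operatorname{Re}\bigl((\cos\tfrac{\theta}{2}+i\sin\tfrac{\theta}{2})^{2d}\bigr)$ is shorter than the paper's route; the paper instead expands $T_d(t)=\sum_m\binom{d}{2m}(t^2-1)^m t^{d-2m}$ directly in the $\psi$-basis and reads off the sum $\sum_m 4^m\binom{d}{2m}\binom{d-2m}{n-m}$, so it never needs your combinatorial identity.

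There is, however, a genuine gap in your route~(a). The assertion that ``a standard equioscillation argument forces the extremizer to attain $\pm 1$ at $d+1$ alternating points'' is false for general linear functionals on $\mathcal{P}_d$. Concretely, since $\psi_{d,m}(1)=\delta_{m,d}$ one has $a_d(Q)=Q(1)$, and the extremal problem $\max\{|Q(1)|:\|Q\|_{[-1,1]}\le 1\}$ is solved by $Q\equiv 1$, which does not equioscillate. It so happens that $T_d$ is also an extremizer here, but nothing in a Chebyshev--Kolmogorov alternation argument forces this: those theorems concern best approximation, not arbitrary linear functionals. To rescue~(a) for the specific functionals $a_n$ you would need to show that every degree-$d$ polynomial with $d$ zeros in $(-1,1)$ has $a_n\neq 0$, and the straightforward way to see that is exactly the sign computation the paper performs.

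Your route~(b) via total positivity is correct---the Bernstein collocation matrix at increasing nodes is totally positive, so its inverse has the checkerboard sign pattern $(-1)^{n+j}$, which is precisely the alternation you need---but it imports a nontrivial external fact. The paper gets the same sign pattern by a two-line elementary trick: writing $t-t_j=\tfrac{1+t}{2}(1-t_j)-\tfrac{1-t}{2}(1+t_j)$ and expanding the Lagrange numerator $\prod_{j\neq m}(t-t_j)$ yields $\psi$-coefficients $\alpha_{m,n}$ that are visibly sums of products of the nonnegative quantities $(1\pm t_j)$, hence $\alpha_{m,n}\ge 0$. This is equivalent to your ``$(-1)^j c_{n,j}$ has constant sign'' and needs no outside input.

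One minor bookkeeping slip in your bijective proof of $\binom{2d}{2n}=\sum_m 4^m\binom{d}{2m}\binom{d-2m}{n-m}$: the number of split pairs must be even (since $2a+b=2n$), so it is $2m$, not $m$; then $2^{2m}=4^m$ counts the in-pair choices and $\binom{d-2m}{n-m}$ the fully-included pairs among the remaining $d-2m$. The identity and the idea are right; only the label in the parenthetical is off.
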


\begin{proof}
We start proving that every degree-$d$ polynomial $Q(t)$ is a linear combination of the elements $\psi_{d,m}$ ($0 \leq m \leq d$) which authomatically will yield the uniqueness of the coefficients by basic linear algebra. In the process, we will give an explicit expression of these coefficients which will let us prove the inequality \eqref{equa:main1MarkovTypeTheoremPsi}, and finally we will calculate explicitely the coefficients $a_{n}(T_{d})$ to conclude from \eqref{equa:main1MarkovTypeTheoremPsi} that \eqref{equa:main2MarkovTypeTheoremPsi} holds.

We start by fixing $d+1$ arbitrary points (although we will choose them suitably later)
\begin{equation}\label{equa:aux1}
-1 \leq t_{0} < t_{1} < \ldots < t_{d} \leq 1.
\end{equation}
Recall that the associated Lagrange basis polynomials are given by
\begin{equation}\label{equa:aux2}
\Delta_{m}(t) = \prod_{\substack{j=0, j \neq m}}^{d}{\frac{t - t_{j}}{t_{m} - t_{j}}} \, , \hspace{3mm} 0 \leq m \leq d
\end{equation}
which satisfy that
\begin{equation}\label{equa:aux3}
Q(t) = \sum_{m=0}^{d}{Q(t_{m}) \Delta_{m}(t)}, \hspace{3mm} t \in [-1,1].
\end{equation}
Now, we are going to write the polynomials $\Delta_{m}(t)$ in terms of the $\psi_{d,n}(t)$. Note that
\begin{equation*}
\prod_{\substack{j=0, j \neq m}}^{d}{(t - t_{j})} = \prod_{\substack{j=0, j \neq m}}^{d}{\left( \left( \frac{1+t}{2} \right) \left(1-t_{j}\right) - \left( \frac{1-t}{2} \right) \left(1+t_{j} \right)  \right)}.
\end{equation*}
Expanding the last product and gathering terms we arrive to an expression of the form
\begin{equation}\label{equa:aux4}
\prod_{\substack{j=0, j \neq m}}^{d}{(t - t_{j})} = \sum_{n=0}^{d}{\alpha_{m,n} \cdot (-1)^{d-n} \cdot \left( \frac{1+t}{2}\right)^{n} \left( \frac{1-t}{2} \right)^{d-n}}\,,
\end{equation}
where the coefficients $\alpha_{m,n}$ are explicitely given by
\begin{equation}\label{equa:aux5}
\alpha_{m,n} = \sum_{\substack{S \subset [d]_{m}\\ |S| = n}}{\, \prod_{j \in S}{(1 + t_{j})} \, \prod_{j \in [d]_{m} \setminus S}{(1 - t_{j})} } \hspace{2mm} \mbox{ being } \, [d]_{m}:=\{ 1, \ldots, d\} \setminus \{ m\}.
\end{equation}
Replacing \eqref{equa:aux4} in \eqref{equa:aux2}, and the resulting expression in \eqref{equa:aux3}, we conclude that
\begin{equation}\label{equa:aux55}
Q(t)  = \sum_{n=0}^{d}{ \left( \sum_{m=0}^{d}{Q(t_{m}) \frac{\alpha_{m,n}
\cdot (-1)^{d-n} }{\prod_{j \neq m}{(t_{m} - t_{j})}}} \right) \psi_{d, n}(t)}, \quad\, t\in [-1, 1].
\end{equation}
This means that the coefficients $a_{n}$ exist and are explicitly given by
\begin{equation}\label{equa:aux6}
a_{n} = a_{n}(Q) = \sum_{m=0}^{d}{Q(t_{m}) \frac{\alpha_{m,n} \cdot (-1)^{d-n} }{\prod_{j \neq m}{(t_{m} - t_{j})}}}, \hspace{3mm} 0 \leq n \leq d.
\end{equation}
This proves the first part of the theorem. Our aim now is to show that \eqref{equa:main1MarkovTypeTheoremPsi}  holds.
Let us notice first that  the $t_{j}$'s are ordered according to \eqref{equa:aux1}, hence
\[ \prod_{j \neq m}{(t_{m} - t_{j})} = (-1)^{d - m} \prod_{j \neq m}{|t_{m} - t_{j}|}, \]
which let us rewrite \eqref{equa:aux6} as
\begin{equation}\label{equa:aux7}
a_{n} = a_{n}(Q) = (-1)^{n} \, \sum_{m=0}^{d}{Q(t_{m}) \frac{\alpha_{m,n}}{\prod_{j \neq m}{|t_{m} - t_{j}|}} (-1)^{m}}.
\end{equation}
Another useful observation which follows from \eqref{equa:aux5} is that  $\alpha_{m,n} \geq 0$ for every $m, n$. Without loss generality we can assume that $\sup_{t \in [-1,1]}{|Q(t)|} = 1$ by normalizing the polynomial. We choose now explicit points of interpolation
\[ t_{m} = \cos{\left( \frac{m \pi}{d}  \right)}, \quad m=0, \ldots, d\,, \]
which of course satisfy the conditions of \eqref{equa:aux1}. Recall that the Chebyshev polynomial $T_{d}$ of degree $d$ satisfies that $T_{d}(t_{m}) = (-1)^{m}$ for each $0 \leq m \leq d$. Hence by \eqref{equa:aux7}
\[ |a_{n}(Q)| \leq \sum_{m=0}^{d}{ \frac{\alpha_{m,n}}{\prod_{j \neq m}{|t_{m} - t_{j}|}} } = \sum_{m=0}^{d}{ T_{d}(t_{m}) \frac{\alpha_{m,n}}{\prod_{j \neq m}{|t_{m} - t_{j}|}} (-1)^{m}} = (-1)^{n} a_{n}(T_{d}), \]
which implies \eqref{equa:main1MarkovTypeTheoremPsi}. Finaly, we compute the coefficients $a_{n}(T_{d})$ using the expression of the Chebyshev polynomial
\begin{align*}
T_{d}(t) & = \sum_{m=0}^{\lfloor d/2 \rfloor}{\binom{d}{2m}  (t^{2} - 1)^{m} \, t^{d - 2m} } \\
& = \sum_{m=0}^{\lfloor d/2 \rfloor}{\binom{d}{2m} (t-1)^{m} (t+1)^{m} \left( \frac{t-1}{2} + \frac{t+1}{2} \right)^{d - 2m}  } \\
& = \sum_{m=0}^{\lfloor d/2 \rfloor}{ \binom{d}{2m} (t-1)^{m} (t+ 1)^{m} \, \sum_{k=0}^{d - 2m}{\binom{d-2m}{k} \left( \frac{t-1}{2} \right)^{k} \left( \frac{t + 1}{2}\right)^{d - 2m - k}}  }\\
& =  \sum_{m=0}^{\lfloor d/2 \rfloor}{ \sum_{k=0}^{d-2m}{ \binom{d}{2m} \binom{d-2m}{k} 2^{2m} \left( \frac{t-1}{2}\right)^{k+m} \left( \frac{t+1}{2}\right)^{d-m-k} } }\\
& = \sum_{n=0}^{d}{\left( \sum_{m=0}^{\min{\{ d-n, n\}}}{4^{m} \binom{d}{2m} \binom{d-2m}{n-m}} \right)  \left( \frac{t-1}{2}\right)^{n} \left( \frac{t+1}{2}\right)^{d-n} }\,,
\end{align*}
and therefore
\begin{equation} \label{Chebby}
a_{n}(T_{d}) = (-1)^{n} \, \sum_{m=0}^{\min{\{ d-n, n\}}}{4^{m} \binom{d}{2m} \binom{d-2m}{n-m}}\,.
\end{equation}
This  finishes the proof.
\end{proof}

\begin{proof}[Proof of Proposition \ref{Prop:estimationTwoBlockDecoupledMain}]
Fix $x,y \in \{ \pm 1\}^{n}$. Notice that since $L_{Q}$ is affine in each variable, for every $t \in [-1, 1]$ we have that
\begin{equation}\label{equa:CoroEstimationDecoupledAux1}
\begin{split}
P(t) & :=Q\left( \frac{1+t}{2} x + \frac{1-t}{2} y \right)\\
& \hspace{1mm} = \sum_{m=0}^{d}{\binom{d}{m} \left( \frac{1+t}{2} \right)^{m} \left( \frac{1-t}{2} \right)^{d-m} L_{Q}(\underbrace{x, \ldots, x}_{m}, \underbrace{y, \ldots,y}_{d-m})  }.
\end{split}
\end{equation}
Moreover, $P(t)$ is the evaluation of $Q$ at a convex combination of elements in $[-1,1]^{n}$, thus
\[ \sup_{t \in [-1,1]}{|P(t)|} \leq \| Q\|_{\infty}. \]
Without loss of generality we can assume that $\| Q\|_{\infty} = 1$.
An application of Theorem \ref{Theo:MarkovTypeTheoremPsi} to \eqref{equa:CoroEstimationDecoupledAux1} gives that, if $0 \leq m \leq d/2$, then we have the bound
\[
\big| L_{Q}(\underbrace{x, \ldots, x}_{m}, \underbrace{y, \ldots,y}_{d-m}) \big| \leq \frac{1}{\binom{d}{m}} \,\sum_{k=0}^{m}{4^{k} \binom{d}{2k} \binom{d-2k}{m-k}} .
\]
To estimate the last constant from above we will use the following immediate consequence of Stirling approximation formula
\[
(2k)! = \binom{2k}{k} (k!)^{2} \geq \frac{4^{k}}{2 \sqrt{k}}(k!)^{2}, \quad\, k \geq 1.
\]
Then
\begin{align*}
\frac{1}{\binom{d}{m}} \sum_{k=0}^{m}{4^{k} \binom{d}{2k} \binom{d-2k}{m-k}} & = \sum_{k=0}^{m}{4^{k} \frac{m! \,
(d-m)!}{(2k)! \, (m-k)! \, (d-m-k)!}}\\
& \leq 1 + \sum_{k=1}^{m}{2 \, \sqrt{k} \, \binom{m}{k} \binom{d-m}{k}}\\
& \leq 2 \, \sum_{k=0}^{m}{\binom{m}{k} (d-1)^{k}} = 2 d^{m},
\end{align*}
which completes the proof.
\end{proof}

\subsection{Proof of the degree-$d$-case}
For the proof of Theorem \ref{main} we
follow again the general strategy from section \ref{strategy}
 for which we must introduce some more notation. For each $n \in \N$ and each  finite $A \subset \N$ we define the sets
\begin{align*}
\mathcal{I}_0(A,n) & := \left\{ \mathbf{i}: A \rightarrow [n] \cup \{ 0\} \mbox{ map } \right\},\\
\mathcal{J}_0(A,n) & := \left\{ \mathbf{j}: A \rightarrow [n] \cup \{ 0\} \mbox{ non-decreasing map } \right\}.
\end{align*}
In case $A = [d]$ for some $d \in \N$, we denote the previous sets by $ \mathcal{I}_0(d,n)$ and $\mathcal{J}_0(d,n)$ repectively. Given $\mathbf{i}\in \mathcal{I}_0(A,n)$, we write
$ \supp{\mathbf{i}} = \{ k \colon \mathbf{i}(k) \neq 0 \} $
to denote the support of $\mathbf{i}$. We again consider the following equivalence relation: $\mathbf{i}_{1} \sim \mathbf{i}_{2}$ in  $\mathcal{I}_0(A,n)$ whenever there is a bijection $\sigma: A \rightarrow A$ such that $\mathbf{i}_{1} = \mathbf{i}_{2} \circ \sigma$.
For each $\mathbf{i} \in \mathcal{I}_0(A,n)$  there is a unique $\mathbf{j} \in \mathcal{J}_0(A,n)$ such that $\mathbf{j} \in [\mathbf{i}]$, where $[\mathbf{i}]$ as above stands for the equivalent class of $\mathbf{i}$. This identification let us rewrite the monomial expansion \eqref{equa:monomialExpansion1} of a degree-$d$ polynomial $Q: \mathbb{R}^{n} \rightarrow \mathbb{R}$ as
\begin{equation}\label{equa:monomialExpansion2}
Q(x) = \sum_{\mathbf{j} \in \mathcal{J}_0(d, n)}{ a_{\mathbf{j}} \, \prod_{k \in \supp{\mathbf{j}}}{x_{\mathbf{j}(k)}} }, \hspace{5mm}  x \in \mathbb{R}^{n}.
\end{equation}
Recall that tetrahedral polynomials are affine in each variable, which using the notation of \eqref{equa:monomialExpansion2} means that the nonzero coefficients $a_{\mathbf{j}}$ must satisfy that $\mathbf{j}$ is injective on its support. For that, we will say that an element $\mathbf{i} \in \mathcal{I}_0(A,n)$ is affine if  $\mathbf{i}(k) \neq \mathbf{i}(k')$ whenever $k, k' \in \supp{\mathbf{i}}$ are different; note that then every element of $[\mathbf{i}]$ is also affine. As a consequence tetrahedral polynomials have a~monomial expansion of the form
\begin{equation}\label{equa:monomialExpansionTetrahedral2}
Q(x) = \sum_{\substack{\mathbf{j} \in \mathcal{J}_0(d, n)\\ \text{ affine }}}{ a_{\mathbf{j}} \, \prod_{k \in \supp{\mathbf{j}}}{x_{\mathbf{j}(k)}} }, \hspace{5mm}  x \in \mathbb{R}^{n}.
\end{equation}
On the other hand, the associated $d$-affine form of a~polynomial $Q$ with monomial expansion \eqref{equa:monomialExpansion2} is then given by (check I,II,III from section \ref{tetra})
\begin{equation}
L_{Q}(x^{(1)}, \ldots, x^{(d)}) = \sum_{\mathbf{i} \in \mathcal{I}_0(d, n)}{c_{\mathbf{i}} \, \prod_{k \in \supp{\mathbf{i}}}{x^{(k)}_{\mathbf{i}(k)}}}\,,
\end{equation}
 where  $c_{\mathbf{i}} = \frac{a_{\mathbf{j}}}{[\mathbf{j}]}$ if $\mathbf{i} \in  [\mathbf{j}]$.
Again we will use that the cardinality of $[\mathbf{i}], \mathbf{i} \in \mathcal{I}_0(A,n)$ is given by
\begin{equation} \label{again.}
|[\mathbf{i}]| = \frac{|A|!}{|\{ k \colon \mathbf{i}(k) = 0 \}| ! \cdot |\{ k \colon \mathbf{i}(k) = 1 \}| ! \cdot \ldots \cdot |\{ k \colon \mathbf{i}(k) = n \}| !}.
\end{equation}
Moreover, for two  disjoint subsets $A_{1}$ and $A_{2}$ of $\mathbb{N}$, the direct sum of $\mathbf{i}_{1} \in \mathcal{I}_0(A_{1},n)$ and $\mathbf{i}_{2} \in \mathcal{I}_0(A_{2}, n)$ is given by the map
$$ \mathcal{I}_0(A_{1} \cup A_{2}, n) \, \ni \, \mathbf{i}_{1} \oplus \mathbf{i}_{2}: A_{1} \cup A_{2} \rightarrow [n] \cup \{ 0\}.$$
 In particular,
$\mathcal{I}_0(d, n) = \mathcal{I}_0(S,n) \oplus \mathcal{I}_0(\hat{S},n)$
for  $S \subset [d]$ and  $\hat{S}:=[d] \setminus D$. \\

We are now in disposition of proving the main result of this article.

\begin{proof}[Proof of Theorem $\ref{main}$]
Fix $0 \leq m \leq d$.  Let $f:\{\pm 1\}^{n} \rightarrow \R$ be a function of degree $d$ and $Q_{f}$ the associated tetrahedral polynomial. We know from \eqref{equa:monomialExpansionTetrahedral2}  that it has a monomial expansion
\[
Q_{f}(x) = \sum_{\mathbf{j} \in \mathcal{J}(d,n)}{a_{\mathbf{j}} \prod_{k \in \supp{\mathbf{j}}}{x_{\mathbf{j}(k)}} }\,,
\]
where $a_{\mathbf{j}} = 0$ whenever  $\mathbf{j}$ is not affine; we  extend this definition by putting $a_{\mathbf{i}} = a_{\mathbf{j}}$ if $\mathbf{i} \in [\mathbf{j}]$ for $\mathbf{i} \in \mathcal{I}_0(d,n)$, $\mathbf{j} \in \mathcal{J}_0(d,n)$\,.
 Let  $L_{f}:(\mathbb{R}^{n})^{d} \rightarrow \mathbb{R}$ be the  $d$-affine form of $Q_f$ described above. Then for every $S \subset [d]$  and $x^{(1)}, \ldots, x^{(d)} \in \mathbb{R}^{n}$
\begin{align*}
L_{f}(x^{(1)}, \ldots, x^{(d)}) =  \sum_{\mathbf{i}_{1} \in \mathcal{I}_0(S,n)}{ \sum_{\mathbf{i}_{2} \in \mathcal{I}_0(\hat{S},n)}{ c_{\mathbf{i}_{1} \oplus \mathbf{i}_{2}} \, \prod_{k \in \supp{\mathbf{i}_{1}}}{x_{\mathbf{i}_{1}(k)}^{(k)}} \prod_{k \in \supp{\mathbf{i}_{2}}}{x_{\mathbf{i}_{2}(k)}^{(k)}}  }}\,,
\end{align*}
 where
\[ c_{\mathbf{i}_{1} \oplus \mathbf{i}_{2}} = \frac{a_{\mathbf{i}_{1} \oplus \mathbf{i}_{2}}}{| [\mathbf{i}_{1} \oplus \mathbf{i}_{2}] | }  \]
(check again that the right-hand side satisfies the three conditions I, II, III, and recall that $L_f$ is unique).
For $|S| = m$ and $x,y \in [-1,1]^{n}$, evaluating the map $L_{f}$ at the elements $x^{(k)} = x$ if $k \in S$ and $x^{(k)} = y$ if $k \notin S$, we have by the symmetry of $L_{f}$ that
\begin{align*}
L_{f}(\underbrace{x, \ldots, x}_{m}, \underbrace{y, \ldots, y}_{d-m}) & = \sum_{\mathbf{i}_{1} \in \mathcal{I}_0(S,n)} \sum_{\mathbf{i}_{2} \in \mathcal{I}_0(\hat{S},n)}{ c_{\mathbf{i}_{1} \oplus \mathbf{i}_{2}} \, \prod_{k \in \supp{\mathbf{i}_{1}}}{x_{\mathbf{i}_{1}(k)}} \prod_{k \in \supp{\mathbf{i}_{2}}}{y_{\mathbf{i}_{2}(k)}}  }\\
& = \sum_{\mathbf{j}_{1} \in \mathcal{J}_0(S,n)}{\sum_{\mathbf{j}_{2} \in \mathcal{J}_0(\hat{S},n)}{ | [\mathbf{j}_{1}] | \cdot  | [\mathbf{j}_{2}] | \cdot c_{\mathbf{j}_{1} \oplus \mathbf{j}_{2}} \, x_{\mathbf{j}_{1}} \, y_{\mathbf{j}_{2}}} }\\
& =  \sum_{\mathbf{j}_{1} \in \mathcal{J}_0(S,n)}{\sum_{\mathbf{j}_{2} \in \mathcal{J}_0(\hat{S},n)}{ A_{\mathbf{j}_{1} \oplus \mathbf{j}_{2}}} \, x_{\mathbf{j}_{1}} \, y_{\mathbf{j}_{2}}}\,,
\end{align*}
where
\begin{equation}
A_{\mathbf{j}_{1} \oplus \mathbf{j}_{2}}= \frac{| [\mathbf{j}_{1}] | \cdot  | [\mathbf{j}_{2}] |}{|[\mathbf{j}_{1} \oplus \mathbf{j}_{2}]|} \, a_{\mathbf{j}_{1} \oplus \mathbf{j}_{2}}.
\end{equation}
Note that the non-zero coefficients $a_{\mathbf{j}_{1} \oplus \mathbf{j}_{2}}$ satisfy that $\mathbf{j}_{1} \oplus \mathbf{j}_{2}$ is affine, which implies in particular that $\mathbf{j}_{1}, \mathbf{j}_{2}$ are affine, and
so by \eqref{again.} we in this case  can bound
\begin{equation}\label{equa:cardinalEquivalenceClassesControlled}
\frac{|[\mathbf{j}_{1} \oplus \mathbf{j}_{2}]|}{|[\mathbf{j}_{1}]| \cdot |[\mathbf{j}_{2}]|} = \frac{d! \cdot |\{ k \colon \mathbf{j}_{1}(k) = 0\}| \, ! \cdot |\{ k \colon \mathbf{j}_{2}(k) = 0\}| \, !}{(|\{ k\colon (\mathbf{j}_{1} \oplus \mathbf{j}_{2})(k) = 0\}|) \, ! \cdot m! \cdot (d-m)!} \leq \binom{d}{m}\,.
\end{equation}
Using Proposition \ref{Theo:Blei'sFormula1}, we have that
\begin{equation}\label{equa:BleiAux}
\Big(\sum_{\mathbf{j} \in \mathcal{J}_0(d,n)}{|a_{\mathbf{j}}|^{\frac{2d}{d+1}}}\Big)^{\frac{d+1}{2d}}  \leq \left[ \prod_{\substack{S \subset [d]\\ |S| = m}}{\Big( \sum_{\mathbf{j}_{1} \in \mathcal{J}_0(S,n)}{\Big( \sum_{\mathbf{j}_{2} \in \mathcal{J}_0(\hat{S},n)}{|a_{\mathbf{j}_{1} \oplus \mathbf{j}_{2}}|^{2}} \Big)^{\frac{1}{2} \frac{2m}{m+1}}} \Big)^{\frac{m+1}{2m}}} \right]^{\frac{1}{\binom{d}{m}}}\,.\\
\end{equation}
For fixed $S \subset [d]$ with $|S|=m$, we are going to estimate from above each factor in the right-hand side of \eqref{equa:BleiAux}. If we denote $\rho_{r} := (r-1)^{-\frac{1}{2}}$ for $r > 1$, then using \eqref{hypo} and  \eqref{equa:cardinalEquivalenceClassesControlled}, we have that for each $\mathbf{j}_{1} \in \mathcal{J}_0(S,n)$ \begin{align*}
\left(\sum_{\mathbf{j}_{2}}{|a_{\mathbf{j}_{1} \oplus \mathbf{j}_{2}}|^{2}}\right)^{\frac{1}{2}} & = \frac{|[\mathbf{j}_{1} \oplus \mathbf{j}_{2}]|}{ | [\mathbf{j}_{1}] | \cdot  | [\mathbf{j}_{2}] |} \left(\sum_{\mathbf{j}_{2}}{ \left| A_{\mathbf{j}_{1} \oplus \mathbf{j}_{2}} \right|^{2}}\right)^{\frac{1}{2}}\\
& \leq \binom{d}{m}  \left(\sum_{\mathbf{j}_{2}}{ \left| A_{\mathbf{j}_{1} \oplus \mathbf{j}_{2}} \right|^{2}}\right)^{\frac{1}{2}}= \binom{d}{m} \, \left( \mathbb{E}_{x} \, \left| \sum_{\mathbf{j}_{2}}{ A_{\mathbf{j}_{1} \oplus \mathbf{j}_{2}} \, x_{\mathbf{j}_{2}}} \right|^{2}\right)^{\frac{1}{2}}\\
& \leq \binom{d}{m} \, \rho^{d-m}_{\frac{2m}{m+1}} \, \left( \mathbb{E}_{x} \, \left| \sum_{\mathbf{j}_{2} }{A_{\mathbf{j}_{1} \oplus \mathbf{j}_{2}} x_{\mathbf{j}_{2}}} \right|^{\frac{2m}{m+1}} \right)^{\frac{m+1}{2m}}\,.
\end{align*}
Summing over all $\mathbf{j}_{1} \in \mathcal{J}_0(S,n)$ we obtain
\begin{align*}
\begin{split}
\left( \sum_{\mathbf{j}_{1}}{\left( \sum_{\mathbf{j}_{2}}{|a_{\mathbf{j}_{1} \oplus \mathbf{j}_2}|^{2}}  \right)^{\frac{1}{2} \frac{2m}{m+1}}} \right)^{\frac{m+1}{2m}}
& \leq \binom{d}{m} \, \rho_{\frac{2m}{m+1}}^{d-m} \,  \left( \sum_{\mathbf{j}_{1}}{ \mathbb{E}_{x} \left| \sum_{\mathbf{j}_{2} }{ A_{\mathbf{j}_{1} \oplus \mathbf{j}_{2}} x_{\mathbf{j}_{1}} } \right|^{\frac{2m}{m+1}} } \right)^{\frac{m+1}{2m}}\\
& = \binom{d}{m} \, \rho_{\frac{2m}{m+1}}^{d-m} \,  \left( \mathbb{E}_{x} \left[ \sum_{\mathbf{j}_{1}}{  \left| \sum_{\mathbf{j}_{2}}{ A_{\mathbf{j}_{1} \oplus \mathbf{j}_{2}} x_{\mathbf{j}_{1}} } \right|^{\frac{2m}{m+1}} } \right] \right)^{\frac{m+1}{2m}} \\
& \leq \binom{d}{m}  \, \rho_{\frac{2m}{m+1}}^{d-m} \,  \sup_{x \in \{ \pm 1\}^{n}} \left( \sum_{\mathbf{j}_{1}}{  \left| \sum_{\mathbf{j}_{2}}{ A_{\mathbf{j}_{1} \oplus \mathbf{j}_{2}} x_{\mathbf{j}_{1}}} \right|^{\frac{2m}{m+1}} } \right)^{\frac{m+1}{2m}}
\end{split}
\end{align*}
and so the last supremum we bound by Proposition \ref{Prop:estimationTwoBlockDecoupledMain}
\begin{align*}
\sup_{x \in \{ \pm 1\}^{n}}\left( \sum_{\mathbf{j}_{1}}{  \left| \sum_{\mathbf{j}_{2}}{ A_{\mathbf{j}_{1} \oplus \mathbf{j}_{2}} x_{\mathbf{j}_{1}}} \right|^{\frac{2m}{m+1}} } \right)^{\frac{m+1}{2m}} & \leq \BH_{\{\pm 1\}}^{\leq m}
\sup_{x, y \in \{ \pm 1\}^{n}}   \left| \sum_{\mathbf{j}_{1} }{  \sum_{\mathbf{j}_{2} }{ A_{\mathbf{j}_{1} \oplus \mathbf{j}_{2}} x_{\mathbf{j}_{1}} y_{\mathbf{j}_{2}}}  } \right|\\
& = \BH_{\{\pm 1\}}^{\leq m} \, \sup_{x,y \in \{ \pm 1\}^{n}}{\big| L_{f}(\underbrace{x, \ldots, x}_{m}, \underbrace{y, \ldots, y}_{d-m}) \big|}\\
& \leq \BH_{\{\pm 1\}}^{\leq m} \, 2 \, d^{m} \, \| f\|_{\infty}\,.
\end{align*}
Hence
\[ \left( \sum_{\mathbf{j}_{1}}{\Big( \sum_{\mathbf{j}_{2} }{|a_{\mathbf{j}_{1} \oplus \mathbf{j}}|^{2}} \Big)^{\frac{1}{2} \frac{2m}{m+1}}} \right)^{\frac{m+1}{2m}}
\leq \BH_{\{\pm 1\}}^{\leq m} \, \binom{d}{m} \, \rho_{\frac{2m}{m+1}}^{d-m} \,   2 \, d^{m} \, \| f\|_{\infty}.  \]
Since the fixed set $S$ was arbitrary, we can apply this bound to each factor in the right-hand side of \eqref{equa:BleiAux} to obtain that
\begin{align*}
\Big(\sum_{\mathbf{j} \in \mathcal{J}(d,n)}{|a_{\mathbf{j}}|^{\frac{2d}{d+1}}}\Big)^{\frac{d+1}{2d}}  \leq \BH_{\{\pm 1\}}^{\leq m} \, \binom{d}{m} \, \rho_{\frac{2m}{m+1}}^{d-m} \,   2 \, d^{m} \, \| f\|_{\infty}\,,
\end{align*}
and consequently
\begin{align*}
\BH_{\{\pm 1\}}^{\leq d} & \leq \BH_{\{\pm 1\}}^{\leq m} \,\rho_{\frac{2m}{m+1}}^{d-m} \, \binom{d}{m} \, 2 d^{m} =\BH_{\{\pm 1\}}^{\leq m} \,\Big( \frac{m+1}{m-1} \Big)^{\frac{d-m}{2}} \, \binom{d}{m} \, 2 d^{m}\\
& \leq \BH_{\{\pm 1\}}^{\leq m} \left( 1+ \frac{2}{m-1} \right)^{\frac{d-m}{2}}  d^{2m} \leq \BH_{m} \exp{\left( \frac{d-m}{m-1} + 2 m \log{d} \right)}\\
& \leq \BH_{\{\pm 1\}}^{\leq m} \exp{\left( 2 \left( \frac{d}{m+1} + m \log{d} \right)\right)}\,.
\end{align*}
Taking $m=[\sqrt{d/\log{d}}]$ in the previous inequality leads to
\begin{align*}
\BH_{\{\pm 1\}}^{\leq d} & \leq
\BH_{\{\pm 1\}}^{\leq [\sqrt{d/ \log{d}}]}
 \exp{\left( 4 \sqrt{d \log{d}} \right)}\,,
\end{align*}
and iterating (as in the proof of \eqref{main-hom}) concludes the argument.
\end{proof}

\section{Remarks and open problems}

\subsection{Aaronson-Ambainis conjecture} \label{AA}
Theorem \ref{main} proves that $ \BH^{\leq d}_{\{\pm 1\}}$ grows at most subexponetially in the degree $d$, but it might even be true that it at most grows
polynomially, i.e., $ \BH^{\leq d}_{\{\pm 1\}} \leq \poly{d}$.
In the following we intend to explain how the Bohnenblust-Hille cycle of ideas is linked with quantum information theory.\\

Given $n,k \in \mathbb{N}$, a function $f: (\{\pm 1\}^{n})^{k}  \rightarrow \mathbb{R}$
 is said to be a multilinear form if it is linear in each
input, i.e., for all $x_i \in \mathbb{R}^n$ and $y \in \mathbb{R}^n$ (and similarly for the other positions)
\[
f(x_1+y,x_2, \ldots, x_k) = f(x+y,x_2, \ldots, x_k)  +f(y,x_2, \ldots, x_k)\,.
\]
 Clearly, any such $k$-homogeneous Boolean function can be written as
 \[ f(x^{1}, \ldots, x^{k}) = \sum_{1 \leq i_{1}, \ldots, i_{k} \leq n}{a_{i_{1} \ldots i_{k}} x_{i_{1}}^{1} \ldots x_{i_{k}}^{k}}, \hspace{4mm} x^{1}, \ldots x^{k} \in \{ \pm 1 \}^{n}\,, \]
and
Montanaro   proved in \cite{Monta} that
\begin{equation}\label{equa:multilinearBH}
\Big( \sum_{1 \leq i_{1}, \ldots, i_{k} \leq n}{|a_{i_{1} \ldots i_{k}}|^{\frac{2k}{k+1}}} \Big)^{\frac{k+1}{2k}} \leq k^{C} \| f\|_{\infty}\,, \end{equation}
where $C>0$ is an absolute constant. He moreover applies this estimation to XOR games in the setting of quantum information theory, as well as to solve a very special case of the
Aaronson-Ambainis conjecture from \cite{ArAm}:

\begin{Conj}\label{Conj:A-A}
Every function $f:\{\pm 1\}^{n} \rightarrow [-1, 1]$ of degree $d$ satisfies that
\begin{equation}\label{equa:A-A}
\poly(\Var(f)/d) \leq \, \max_{1 \leq j \leq n}{ \Inf_{j}(f)}\,,
\end{equation}
where $\Var{(f)} := \sum_{S \neq \emptyset}{\widehat{f}(S)^{2}}$ and $\Inf_{j}(f) := \sum_{j \in S}{\widehat{f}(S)^{2}}$ for every $j \in [n]$.
\end{Conj}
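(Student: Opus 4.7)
The plan is to adapt the Montanaro-style argument from \cite{Monta} and see how far Theorem \ref{main} carries us toward the conjecture. First I would combine Hölder's inequality with the Bohnenblust--Hille bound, writing $|\widehat{f}(S)|^{2} = |\widehat{f}(S)|^{2d/(d+1)}\cdot |\widehat{f}(S)|^{2/(d+1)}$ and pulling the second factor out, so that
\[
\Var(f) \leq \big(\max_{S\neq\emptyset}|\widehat{f}(S)|\big)^{2/(d+1)} \sum_{S}|\widehat{f}(S)|^{2d/(d+1)}.
\]
Inserting Theorem \ref{main} with $\|f\|_{\infty}\leq 1$ gives $\max_{S\neq\emptyset}|\widehat{f}(S)| \geq \Var(f)^{(d+1)/2} \cdot C^{-d\sqrt{d\log d}}$, and since $\Inf_{j}(f) \geq \widehat{f}(S_{0})^{2}$ whenever $j\in S_{0}$, this in turn yields $\max_{j}\Inf_{j}(f) \geq \Var(f)^{d+1}\cdot C^{-2d\sqrt{d\log d}}$.

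This estimate is polynomial in $\Var(f)$ only for each fixed $d$; both the exponent in $\Var(f)$ and the $d$-dependent prefactor are too large to match the $\poly(\Var(f)/d)$ required by \eqref{equa:A-A}. What the plan does deliver unconditionally is therefore a conditional reduction: an improvement of Theorem \ref{main} to $\BH_{\{\pm 1\}}^{\leq d} \leq \poly(d)$ would, through exactly the same Hölder step, settle the Aaronson--Ambainis conjecture outright.

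An alternative route that avoids asking for a polynomial Bohnenblust--Hille bound is to push Montanaro's multilinear inequality \eqref{equa:multilinearBH}, whose constant \emph{is} already polynomial, through the decoupling of Proposition \ref{Prop:estimationTwoBlockDecoupledMain}. Given $f$ of degree $d$, I would pass to its associated $d$-affine form $L_{f}$, which by Proposition \ref{Prop:estimationTwoBlockDecoupledMain} has sup-norm at most $\poly(d)\cdot\|f\|_{\infty}$, and apply \eqref{equa:multilinearBH} to $L_{f}$ to extract a block coordinate of $L_{f}$ carrying nontrivial Fourier mass. The main obstacle, and the step where I expect genuinely new input to be required, is the reverse transfer: a block-influence of $L_{f}$ does not directly translate into a single-variable influence $\Inf_{j}(f)$ of $f$, and carrying out this translation without losing an exponential factor in $d$ seems to require a new combinatorial identity or a refined decoupling inequality beyond the tools developed in this paper.
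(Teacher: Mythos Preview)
The statement is a \emph{conjecture}, and the paper does not prove it; section~\ref{AA} only records two partial observations about its relationship with the Bohnenblust--Hille constants. So there is no ``paper's own proof'' to compare against, and any honest proposal here should end, as yours essentially does, by acknowledging that the problem remains open.

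Your H\"older computation in the first paragraph is correct and you diagnose the obstruction accurately: the exponent $d+1$ on $\Var(f)$ is the real bottleneck, not merely the size of $\BH_{\{\pm 1\}}^{\leq d}$. But the second paragraph then contradicts this diagnosis. Even assuming $\BH_{\{\pm 1\}}^{\leq d}\leq \poly(d)$, \emph{exactly the same} H\"older step still yields only
\[
\max_{j}\Inf_{j}(f)\;\geq\;\frac{\Var(f)^{\,d+1}}{\big(\BH_{\{\pm 1\}}^{\leq d}\big)^{2d}}\;\geq\;\frac{\Var(f)^{\,d+1}}{\poly(d)^{2d}},
\]
and since $\Var(f)\leq 1$, the power $\Var(f)^{d+1}$ cannot be bounded below by $(\Var(f)/d)^{k}$ for any absolute $k$. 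A polynomial Bohnenblust--Hille bound therefore would \emph{not} settle Conjecture~\ref{Conj:A-A} ``outright'' through this route. The paper is careful on this point: observation~(i) in section~\ref{AA} claims only that polynomial $\BH_{\{\pm 1\}}^{\leq d}$ implies the conjecture for the very restricted class of functions with $\widehat{f}(S)\in\{\pm\alpha\}$, precisely because in that case the exponent problem disappears.

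Your third paragraph is closer in spirit to what the paper actually does. Observation~(ii) in section~\ref{AA} uses the inequality $\Var(f)\leq\sqrt{\max_{j}\Inf_{j}(f)}\cdot\sum_{j}\sqrt{\Inf_{j}(f)}$ together with a mixed $\ell_{1}(\ell_{2})$ bound coming from hypercontractivity (a single instance of the machinery behind Theorem~\ref{main}) to obtain $\Var(f)^{2}/\max_{j}\Inf_{j}(f)\leq d^{4}e^{4d}$, recovering the known exponential-in-$d$ version of the conjecture. Your decoupling idea via $L_{f}$ and \eqref{equa:multilinearBH} runs into the reverse-transfer problem you yourself flag, and nothing in the paper closes that gap either.
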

If this conjecture were true, it would imply
(informally) that all quantum query algorithms could be efficiently simulated by classical query
algorithms on most inputs.
It is not clear whether or not there is a direct link between this conjecture and the possible fact that the best constant in the polynomial Bohnenblust-Hille inequality $\BH^{\leq d}_{\{\pm 1\}}$ for degree-$d$ polynomials grows polynomially on $d$. We highline two facts connecting both questions:\\

 \noindent (i) Following Montanaro's approach from \cite[Corollary 20]{Monta}, if $\BH^{\leq d}_{\{\pm 1\}}$ grows polynomially on $d$, then conjecture \ref{Conj:A-A} holds for all functions $f:\{ \pm 1\}^{n} \rightarrow [-1,1]$ of degree $d$ satisfying that $\widehat{f}(S) \in \{ \pm \alpha\}$ for every $S$ ($\alpha \geq 0$). Indeed, we have
\[  \Var{(f)} = \alpha^{2} \sum_{m=1}^{d}{\binom{n}{m}}\,,\]
and for each $j \in [n]$
\[
\Inf_{j}{(f)} = \alpha^{2} \sum_{m=1}^{d}{\binom{n-1}{m-1}} =  \frac{\alpha^2}{n} \sum_{m=1}^{d}{m \binom{n}{m}}\,.
\]
Moreover, using that  $\binom{n}{d} \geq (\frac{n}{d})^{d}$ we get
\[ \alpha \left[ \frac{n}{d} \sum_{m=1}^{d}{\binom{n}{m}} \right]^{\frac{1}{2}} \leq \alpha \left[\sum_{m=1}^{d}{\binom{n}{m}}\right]^{\frac{d+1}{2d}} \leq \Big(\sum_{S \neq \emptyset}{|\widehat{f}(S)|^{\frac{2d}{d+1}}}\Big)^{\frac{d+1}{2d}} \leq \BH^{\leq d}_{\{\pm 1\}}. \]
All together this implies
\[
\frac{\Var(f)^{2}}{\max_{1 \leq j \leq n}{\Inf_{j}(f)}} \leq \alpha^{2} n \sum_{m=1}^{d}{\binom{n}{m}} \leq d \big(\BH^{\leq d}_{\{\pm 1\}}\big)^{2}.
\]
(ii) By
\cite{DiFrKiDo} conjecture \ref{Conj:A-A} is known to hold whenever  we   in \eqref{equa:A-A} replace $d$  by $2^{d}$. A short proof of this fact based on hypercontractivity can be found in \cite{RyanZhao16}, and there is a close resemblance between the main idea of this proof and the  proofs of  Bohnenblust-Hille type inequalities. Indeed, given $f:\{ \pm 1\}^{n} \rightarrow [-1,1]$ of degree $d$ we have that
\begin{equation}\label{equa:A-AexponentialAux}
\Var{(f)} \leq \sum_{j=1}^{n}{\Inf_{j}(f)} \leq \sqrt{\max_{1 \leq j \leq n}{\Inf_{j}(f)}} \, \sum_{j=1}^{n}{\sqrt{\Inf_{j}(f)}}.
\end{equation}
Following the notation of the proof of Theorem \ref{main}, it is clear that, if  $S = \{ d\}$ and $S' = [d-1]$, then
\[ \sum_{j=1}^{n}{\sqrt{\Inf_{j}(f)}} \leq \sum_{\mathbf{j}_{1} \in \mathcal{J}(S,n)}{\Big(\sum_{\mathbf{j}_{2} \in \mathcal{J}(S',n)}{|a_{\mathbf{j}_{1} \oplus \mathbf{j}_{2}}|^{2}}\Big)^{\frac{1}{2}}}\,, \]
and we already know from that proof that
\[ \sum_{\mathbf{j}_{1} \in \mathcal{J}(S,n)}{\Big(\sum_{\mathbf{j}_{2} \in \mathcal{J}(S',n)}{|a_{\mathbf{j}_{1} \oplus \mathbf{j}_{2}}|^{2}}\Big)^{\frac{1}{2}}} \leq \binom{d}{1} \rho^{d-1}_{1}2 d \leq d^{2} e^{d} . \]
Hence, applying this estimation to \eqref{equa:A-AexponentialAux} we get as desired
\[\frac{\Var(f)^{2}}{\max_{1 \leq j \leq n}{\Inf_{j}(f)}} \leq d^{4} e^{4d}\,.\]
\

\subsection{Lorentz norms}
From \cite{DeMa3} we know that for  each positive integer $d$ the Lorentz space $\ell_{\frac{2d}{d+1},1}$ is the
smallest symmetric Banach sequence space $X$ for which there is a Bohnenblust-Hille inequality (replacing
on the left-hand side of the inequality the $\ell_{\frac{2d}{d+1}}$-norm by the $X$-norm).

\begin{Theo}\label{Theo:LorentzBHIneq}
There is $L \ge 1$ such that every function  $f: \{\pm 1\}^n  \rightarrow \mathbb{R}$
of degree-$d$ satisfies
\begin{equation}\label{equa:LorentzBHIneq}
\sum_{k=1}^{D(n,d)} f^{*}(k) \,\frac{1}{k^{\frac{d-1}{2d}}} \leq L^d \, \|f\|_\infty\,,
\end{equation}
where $D(n,d)= \sum_{m=0}^{d}\binom{n}{m}$ and $\big( f^{*}(k) \big)_{k=1}^{D(n,d)}$ is the decreasing rearrangement  of
$\big(|\widehat{f}(S)| \big)_{\substack{S \subset [n]\\|S| \leq d}}$\,.
\end{Theo}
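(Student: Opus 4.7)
First I observe that the left-hand side of \eqref{equa:LorentzBHIneq} is exactly the Lorentz $\ell_{p,1}$-quasi-norm of the sequence $(|\widehat{f}(S)|)_{|S|\le d}$, with $p = \tfrac{2d}{d+1}$; indeed $\tfrac{1}{p}-1 = -\tfrac{d-1}{2d}$. My plan is to revisit the proof of Theorem~\ref{main} given in Section~\ref{get in} and, at every step, replace the outer $\ell_{\frac{2d}{d+1}}$-norm on coefficient sequences by its Lorentz $\ell_{\frac{2d}{d+1},1}$-counterpart. This fits the abstract framework developed in~\cite{DeMa3}, where $\ell_{\frac{2d}{d+1},1}$ is identified as the smallest symmetric Banach sequence space that can support a Bohnenblust--Hille inequality at degree~$d$.

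The proof of Theorem~\ref{main} rests on three ingredients: \emph{(a)} Blei's inequality (Proposition~\ref{Theo:Blei'sFormula1}); \emph{(b)} the hypercontractive bound~\eqref{hypo}; and \emph{(c)} the polarization estimate for multi-affine forms (Proposition~\ref{Prop:estimationTwoBlockDecoupledMain}). Ingredient \emph{(c)} is a pointwise inequality and transfers to the Lorentz setting with no change. Ingredient \emph{(b)} can be promoted to a Lorentz-norm statement via real interpolation between hypercontractive inequalities at nearby exponents. The crux is ingredient \emph{(a)}: I would establish a Lorentz refinement of Blei's inequality that controls $\|(a_\mathbf{i})\|_{\ell_{p,1}(\mathcal{J}_0(d,n))}$ by a geometric mean, taken over subsets $S\subset[d]$ with $|S|=m$, of mixed norms whose outer layer is the Lorentz $\ell_{\frac{2m}{m+1},1}$-norm (the inner $\ell_2$-norm being preserved). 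Such a Lorentz-Blei inequality is an instance of the general machinery of~\cite{DeMa3}.

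Substituting these Lorentz versions of \emph{(a)}, \emph{(b)}, \emph{(c)} into the degree-$d$ argument of Section~\ref{get in} yields, for each $m \le d$, a recursive estimate
\[
K^{\le d} \;\le\; C\, K^{\le m}\; \rho_{\frac{2m}{m+1}}^{\,d-m}\; \binom{d}{m}\, 2 d^{m},
\]
where $K^{\le d}$ denotes the optimal constant in \eqref{equa:LorentzBHIneq} at degree~$d$ and $C$ is the absolute constant introduced by the Lorentz upgrades of \emph{(a)} and \emph{(b)}. Optimizing~$m$ and iterating as in the proofs of~\eqref{main-hom} and Theorem~\ref{main} yields $K^{\le d} \le L^d$ for some absolute $L\ge 1$. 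The main obstacle is to secure the Lorentz-Blei inequality with a constant independent of the ambient dimension~$n$: any logarithmic-in-$n$ loss at this step would propagate through the iteration and destroy the absolute character of the final bound. This is precisely the point at which the framework of~\cite{DeMa3} is indispensable.
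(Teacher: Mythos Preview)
Your approach is genuinely different from the paper's, and considerably more ambitious. The paper does \emph{not} redo the Boolean machinery of Section~\ref{get in} in a Lorentz setting. Instead it gives a three-line argument: pass from $f$ to its tetrahedral polynomial $Q_f$, regard $Q_f$ as a complex polynomial on $\mathbb{D}^n$, invoke the complex Lorentz--Bohnenblust--Hille inequality already established in \cite[Theorem~16]{DeMa3} (after a homogenization trick $\omega\,Q_f(z_1\overline{\omega},\ldots,z_n\overline{\omega})$ to reduce degree-$d$ to $d$-homogeneous on $\mathbb{T}^{n+1}$), and then compare the complex and real sup-norms via Klimek's inequality, Lemma~\ref{felix1}(2). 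This gives the stated bound directly, with the explicit asymptotic $\limsup_d C(d)^{1/d}\le \sqrt{2}(1+\sqrt{2})$.

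Your route---upgrading Blei's inequality, hypercontractivity, and the recursion of Section~\ref{get in} to Lorentz outer norms---would, if carried out, potentially yield a \emph{subexponential} constant $C^{\sqrt{d\log d}}$ rather than merely $L^d$, since the recursion you write down is identical in form to that of Theorem~\ref{main}. The paper explicitly remarks that it does not know whether the constant in \eqref{equa:LorentzBHIneq} is subexponential, so your plan, if it succeeds, would be a genuine improvement. The price is the Lorentz--Blei inequality, which you correctly flag as the crux: this is not a statement one can simply read off from \cite{DeMa3}, and securing it with an absolute (or even $m$-controlled) constant is real work. For the modest goal of Theorem~\ref{Theo:LorentzBHIneq} as stated, the paper's complexification shortcut is far more economical.
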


\noindent Note that the norm on the left-hand side of \eqref{equa:LorentzBHIneq} is the norm in the Lorentz space
$$\ell_{\frac{2d}{d+1},1} \Big( \{ S \colon S \subset [n],|S| \leq d\}\Big)
= \Big( \prod_{\substack{S \subset [n]\\|S| \leq d}} \mathbb{R}, \|\cdot\|_{\frac{2d}{d+1},1}\Big)\,,$$
and hence this inequality generalizes the  BH-inequality for functions on the Boolean cube -- with the constraint that we do not know whether the constant in \eqref{equa:LorentzBHIneq} still behaves subexponentially.
If $C(d)$  denotes the best possible constant in \eqref{equa:LorentzBHIneq} (replacing $L^d$), then the following proof shows that
$\limsup_{d\rightarrow \infty} C(d)^{1/d}\leq  \sqrt{2}(1 + \sqrt{2})$.

\begin{proof}[Proof of Theorem \ref{Theo:LorentzBHIneq}]
Let $f:\{ \pm 1\}^{n} \rightarrow \mathbb{R}$ be a polynomial  of degree $d$. If $Q_{f}$ is the corresponding  tetrahedral polynomial, then considered
as a~complex polynomial (of degree $d$) it follows from \cite[Theorem 16]{DeMa3} that for every $\varepsilon > 0$ there is a~constant $C(\varepsilon)$ depending just on $\varepsilon$ such that
\begin{equation}\label{equa:LorentzBHIneqAux1}
\Big\|  \big(  \widehat{f}(S) \big)_{\substack{S \subset [n]\\|S| \leq d}} \Big\|_{\ell_{\frac{2d}{d+1},1}} \leq C(\varepsilon) (\sqrt{2} + \varepsilon)^d  \sup_{z \in \mathbb{D}^n}{|Q_{f}(z)|}.
\end{equation}
Note that the  result in \cite[Theorem 16]{DeMa3} is claimed for $d$-homogeneous functions. But the result easily extends to the degree-$d$-case; indeed, every degree-$d$ polynomial $Q_{f}$ on $[-1,1]^n$ can be viewed  as a $d$-homogeneous polynomial
on $\mathbb{T}^{n+1}$  with the the same complex norm and the same coefficients:  consider  $\omega \,Q_{f}(z_{1}\overline{\omega}, \ldots, z_{n} \overline{\omega} )$. On the other hand, recall
Klimek's result from Lemma \ref{felix1}(4)
\begin{equation*}\label{equa:LorentzBHIneqAux2}
\sup_{z \in \mathbb{D}^{n}}{|Q_{f}(z)|} \leq (1 + \sqrt{2})^{d} \, \sup_{x \in [-1,1]}{|Q_{f}(x)|}.
\end{equation*}
Combining this with \eqref{equa:LorentzBHIneqAux1} we get \eqref{equa:LorentzBHIneq}.
\end{proof}

\subsection{Norm estimates for homogeneous parts} \label{get out}
In view of
Lemma \ref{felix1},(4) and the difficulties described at the beginning of
section \ref{get in} it would be interesting to know whether $C= 1 + \sqrt{2}$ is the best constant satisfying
$$ \| f_{m}\|_{\infty} \leq C^{d} \| f\|_{\infty} $$
for every $f:\{ \pm 1\}^{n} \rightarrow \mathbb{R}$ of degree $d$ and $0 \leq m \leq d$. See also the commend made at the end of section \ref{get in}.

\vspace{2.5 mm}

\noindent
Institut f\"ur Mathematik \\
Carl von Ossietzky Universit\"at \\
Postfach 2503 \\
D-26111 Oldenburg, Germany

\vspace{0.5 mm}

\noindent E-mail: andreas.defant@uni-oldenburg.de

\vspace{2.5 mm}

\noindent Faculty of Mathematics and Computer Science\\
Adam Mickiewicz University in Pozna\'n\\
Umultowska 87\\
61-614 Pozna{\'n}, Poland

\vspace{0.5 mm}

\noindent E-mail: mastylo$@$amu.edu.pl

\vspace{2.5 mm}

\noindent
Departamento de Matem\'{a}ticas \\
Universidad de Murcia, Espinardo \\
30100 Murcia (Spain)

\vspace{0.5 mm}

\noindent
E-mail: antonio.perez7@um.es
\end{document}